\newcommand{\C}{{\mathcal{C}}}
\newcommand{\hyp}{\operatorname{hyp}}
\newcommand{\ev}{\operatorname{ev}}
\newcommand{\rk}{\operatorname{rk}}
\newcommand{\HF}{\operatorname{HF}} 
\newcommand{\reg}{\operatorname{reg}}
\theoremstyle{plain}
\newtheorem*{thm*}{Theorem}
\newtheorem{defn0}{Definition}[section]
\newtheorem{prop0}[defn0]{Proposition}
\newtheorem{quest0}[defn0]{Question}
\newtheorem{thm0}[defn0]{Theorem}
\newtheorem{lem0}[defn0]{Lemma}
\newtheorem{corollary0}[defn0]{Corollary}
\newtheorem{example0}[defn0]{Example}
\newtheorem{remark0}[defn0]{Remark}
\newtheorem{conj0}[defn0]{Conjecture}
\newenvironment{defn}{\begin{defn0}\rm}{\end{defn0}}
\newenvironment{prop}{\begin{prop0}}{\end{prop0}}
\newenvironment{thm}{\begin{thm0}}{\end{thm0}}
\newenvironment{lem}{\begin{lem0}}{\end{lem0}}
\newenvironment{cor}{\begin{corollary0}}{\end{corollary0}}
\newenvironment{ex}{\begin{example0}\rm}{\end{example0}}
\newenvironment{rem}{\begin{remark0}\rm}{\end{remark0}}
\begin{document}


\title{Geometry of the Minimum Distance}
\author{John Pawlina and \c{S}tefan O. Toh\v{a}neanu}

\subjclass[2020]{Primary 13P25; Secondary 13D02, 13D40, 94B27, 14G50, 11T71} \keywords{evaluation codes, generalized projective Reed-Muller codes, minimum distance, socle degrees, initial degree, free resolution. \\ \indent Department of Mathematics and Statistical Science, University of Idaho, Moscow, Idaho 83844-1103, USA, Email: jpawlina@uidaho.edu, tohaneanu@uidaho.edu, Phone: 208-885-6234, Fax: 208-885-5843.}

\begin{abstract}
	Let \({\mathbb K}\) be any field, let \(X\subset {\mathbb P}^{k-1}\) be a set of \(n\) distinct \({\mathbb K}\)-rational points, and let \(a\geq 1\) be an integer. In this paper we find lower bounds for the minimum distance \(d(X)_a\) of the evaluation code of order \(a\) associated to \(X\). The first results use \(\alpha(X)\), the initial degree of the defining ideal of \(X\), and the bounds are true for any set \(X\). In another result we use \(s(X)\), the minimum socle degree, to find a lower bound for the case when \(X\) is in general linear position. In both situations we improve and generalize known results.
\end{abstract}

\maketitle



\section{Introduction}\label{sec_intro}

	A loose definition of evaluation codes is that they are linear codes obtained by evaluating the elements of a finite-dimensional \({\mathbb K}\)-vector space, \(V\), of rational functions/ polynomials on a finite set of affine or projective points, \(X\). For example, for algebraic-geometric codes (AG-codes), \(V\) is the Riemann-Roch space and \(X\) is a divisor on an irreducible curve; toric codes have a similar definition. Let \(R = {\mathbb K}[x_1,\dots,x_k]\) be the ring of polynomials over \({\mathbb K}\). For the classical Reed-Solomon and (generalized) Reed-Muller codes, \({\mathbb K} = {\mathbb F}_q\), while \(V\) is the vector space of all polynomials either of degree at most \(a\), i.e. \(V = R_{\leq a}\), or of degree exactly \(a\), i.e. \(V = R_a\), and the finite sets are the sets of all \({\mathbb F}_q\)-rational points of the ambient affine or projective space. From this last description, evaluation codes are now known to be the codes where \(V\) is chosen to be a subspace of \(R_{\leq a}\) or \(R_a\) and \(X\) is a finite set of points in affine or projective space. Also, \({\mathbb K}\) may be taken to be any field. We found \cite{JaVaVi} to be a good reference.

	Our introduction to evaluation codes is the description that is found in \cite{Ha1}: \(V = R_a\) and \(X\) is a finite set of distinct \({\mathbb K}\)-rational points in \({\mathbb P}^{k-1}\). Some authors call this type of code a generalized projective Reed-Muller code, while others may refer to an affine version of this kind of code as a Reed-Muller-type code.

	Numerous researchers have been analyzing the parameters of evaluation codes, especially because of their strong connection with commutative and homological algebra: if \(\C(X)_a\) is the evaluation code of order \(a\) associated to a finite set of points \(X\), then the length of \(\C(X)_a\) is the degree of the defining ideal of \(X\), the dimension equals the Hilbert function of the ring of regular functions on \(X\) (or coordinate ring of \(X\)) evaluated at \(a\), and the minimum (Hamming) distance \(d(X)_a\) of \(C(X)_a\) is strongly connected with the maximum cardinality of a proper subset of \(X\) contained in a hypersurface of degree \(a\). This geometric interpretation of the minimum distance led to the title of this article, but most importantly it leads immediately to the famous Cayley-Bacharach Theorem. In Section \ref{sec_evalcodes} we describe in more detail what we mentioned in this paragraph.

	There are two general approaches to studying evaluation codes: The first is more computational, where the sets \(X\) are somewhat specific and we can work in a hands-on way with them. See \cite{JaVaVi} for a good example of this type of research. The second approach uses homological invariants to understand the minimum distance. Of course, both approaches are useful for discovering and proving theorems and there are overlaps between them. In this paper we will pursue the second approach, and for details we have Section \ref{sec_commalg} for some brief preliminaries.
	
	Let \(s(X)\) be the minimum socle degree. This is a number that can be obtained from the minimum shift in the ``tail'' of a graded minimal free resolution of the coordinate ring of \(X\), i.e., \({\bf F}_{k-1}\) in the free resolution (\ref{eqn_free_res}) in \S \ref{sec_commalg} below. For the case when the coordinate ring is Gorenstein (such as for any complete intersection), \(s(X)\) equals the Castelnuovo-Mumford regularity. Under certain conditions, it has been discovered that \(d(X)_a\geq s(X)-a+1\). This result was obtained in \cite{Ha2}, in \cite{GoLiSc}, and in \cite{To4} when \(X\) is a finite set of points in \({\mathbb P}^2\) whose defining ideal is a complete intersection, then more generally under the same conditions in \({\mathbb P}^{k-1}\) for \(k\geq 3\), and finally in \({\mathbb P}^{k-1}\) for \(k\geq 3\) when the coordinate ring is Gorenstein, respectively. In \cite{To4}, the same bound is obtained for {\em any} set of points in \({\mathbb P}^2\) or \({\mathbb P}^3\). The same bound is obtained in \cite{To5} for any set of points in \({\mathbb P}^{k-1}\) for \(k\geq 3\), but under the condition that the base field is of zero characteristic. This condition is imposed by the theoretical results used in the proof, and it is not known if there are different proofs which will allow dropping this condition, which is very restrictive from a coding theory or computational point of view.

	For any finite set of points \(X\subset{\mathbb P}^{k-1}, k\geq 3\), and any base field, \cite{ToVa} obtained the bound \(d(X)_1\geq \alpha(X)-1\), where \(\alpha(X)\) is the initial degree of \(X\), which is the minimum degree of a generator of the defining ideal of \(X\). Compared to the bound \(d(X)_a\geq s(X)-a+1\) (true for characteristic zero, and conjectured in general), when \(a=1\), the former bound is not as good since \(s(X)\geq \alpha(X)-1\), with equality in very special cases.

	The article \cite{BaFo} presented a bound which has been the inspiration for the work in our article. The authors demonstrated that if \(X\subset{\mathbb P}^{k-1}, k\geq 3,\) is a finite set of points in general linear position whose defining ideal is a complete intersection, then \(d(X)_a\geq (k-1)(s(X)-a-1)+2\). What drew our attention was the coefficient \(k-1\) in front of \(s(X)\), which provides a much better bound than those we mentioned above.

	In our article we obtain the following results. In Corollary \ref{cor_alphabound}, Theorem \ref{thm_better}, and Proposition \ref{prop_bound_comparison} we demonstrate for any finite set of points \(X\subset {\mathbb P}^{k-1}, k\geq 3\), and any integer \(1\leq a\leq \alpha(X)-1\) we have
		\[
			d(X)_a\geq {{\alpha(X)-1-a+k-1}\choose{k-1}}\geq (k-1)(\alpha(X)-1-a)+1.
		\]
In Theorem \ref{thm_socle}, for any finite set of points \(X\subset{\mathbb P}^{k-1}, k\geq 3\), in general linear position (so not necessarily also a complete intersection), we have either
		\[
			d(X)_a\leq k-1 \ \ \ \mbox{or} \ \ \ d(X)_a\geq (k-1)(s(X)-1-a)+2.
		\]
The geometry of the points \(X\) becomes transparent in the proofs of Corollary \ref{cor_alphabound} and Theorem \ref{thm_socle}: the key fact is that \(n\) points in \({\mathbb P}^{k-1}\) can be all placed on \(\lceil n/(k-1)\rceil\) hyperplanes.

	One may question the legitimacy of using classical algebraic geometry techniques when working over a finite base field \({\mathbb K}\) which is, of course, not algebraically closed. For the theoretical results presented in this paper, we do not require this condition: The ideals of points \(I(X)\), above, are intersections of linear prime ideals in \(R:={\mathbb K}[x_1,\ldots,x_k]\). Via the inclusion \({\mathbb K}\subseteq \overline{\mathbb K}\), the algebraic closure, we can think of the points of \(X\) as having coordinates in \(\overline{\mathbb K}\), and \(I(X)\subset\overline{\mathbb K}[x_1,\ldots,x_k]\), but, because \(X\) is already a \({\mathbb K}\)-rational variety, the geometric features of \(X\) coming from the coding theory remain the same under this embedding. We can say also that the algebraic geometric properties of \(I(X)\) are not affected by this embedding, since the primary decomposition of \(I(X)\) has all components linear prime ideals defining the \({\mathbb K}\)-rational points of \(X\). Furthermore, if we look at the commutative algebraic / homological features of \(I(X)\), in any instance one always extends the coefficients to be regarded in \(\overline{\mathbb K}\). For more details, see the proof of Theorem 4.10 in \cite{CSTVV}.

	The issues with the base field not being algebraically closed usually occur if we want to look at the problem from the reverse point of view, or if we want to give some intuitive examples. For example, if we describe the ideal \(I(X)\) in terms of its generators, we may have issues if \({\mathbb K}\) is not algebraically closed. More concretely, if we want \(X\subset {\mathbb P}_{\mathbb K}^2\) to be the transverse intersection of a conic curve and a cubic curve (so \(I(X)\) is a complete intersection), then obviously here we apply B\'{e}zout's Theorem to obtain that \(X\) consists of 6 points, but the coordinates of these points are in \(\overline{\mathbb K}\). Or, as we will see in some examples, we will be choosing many points in \({\mathbb P}_{\mathbb K}^{k-1}\), and if the field \({\mathbb K}\) does not have enough elements, this selection is indeed impossible.


\section{Preliminaries}\label{sec_prelim}

	Let \(\C\) be a linear code with minimum distance \(d\) and with generator matrix \(G\) of size \(k\times n\), with \(n\geq k\geq 1\), which has full rank \(k\) and no zero columns (i.e., \(\C\) is {\em nondegenerate}). We call \(\C\) an \([n,k,d]\)-linear code. In this paper we will suppose that no two columns of \(G\) are proportional. That is, we consider the set of \(n\) points \(X_{\C}:=\{P_1,\ldots,P_n\}\subset{\mathbb P}^{k-1}\), where the homogeneous coordinates of the point \(P_i\) are the entries of the \(i\)-th column of \(G\). When there is no risk of confusion, the subscript \(\C\) will be omitted from \(X_\C\).
	
	We will say that {\em \(\C\) is defined by the set \(X_{\C}\)}; of course, this ``definition'' is modulo a permutation of the order in which the points are listed. Nonetheless, different orders and different representatives of the coordinates of the points lead to ``monomially equivalent codes''.  Two \([n,k]\)-linear codes with \(k\times n\) generator matrices \(G_1\) and \(G_2\) are {\em monomially equivalent} if and only if there exists an \(n\times n\) invertible matrix \(M\), which is the product of a diagonal matrix and a permutation matrix, such that \(G_1=G_2M\). Two monomially equivalent codes have the same parameters \(n, k\), and \(d\).
	
	For any finite set of points \(Y\subset{\mathbb P}^{k-1}\), define \(\hyp(Y)\) to be the maximum number of the points of \(Y\) that are contained in a hyperplane of \({\mathbb P}^{k-1}\). Note that \(X_C\) is not all contained in a hyperplane because \(\operatorname{rk}(G) = k\). That is, \(\hyp(X_C)<|X|\). The following is a well-known result; see, for example, Part 1.1.2 of \cite{Tsfasman} for details.

\begin{prop}\label{prop_mindistpoints}
	Let \(\C\) be an \([n,k,d]\)-linear code with defining set \(X_{\C}\subset{\mathbb P}^{k-1}\). Then the minimum distance \(d\) of \(\C\) satisfies \(d=n-\hyp(X_{\C})\).
\end{prop}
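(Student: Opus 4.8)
The plan is to compute the Hamming weight of an arbitrary nonzero codeword directly in geometric terms. Write the generator matrix as $G=[\,P_1\mid\cdots\mid P_n\,]$, where $P_i\in{\mathbb K}^k\setminus\{0\}$ is a chosen representative of the $i$-th point of $X_{\C}$. Every codeword of $\C$ has the form $c_v:=vG$ for a row vector $v\in{\mathbb K}^k$, and since $\operatorname{rk}(G)=k$ the linear map $v\mapsto vG$ is injective; hence $c_v\neq 0$ exactly when $v\neq 0$, and every nonzero codeword arises in this way for a unique $v$. The $i$-th coordinate of $c_v$ is the scalar obtained by multiplying the row $v$ by the $i$-th column $P_i$ of $G$.

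The next step is to read off the support of $c_v$ as an incidence condition. For $v\neq 0$ the equation $v\cdot x=0$ defines a hyperplane $H_v\subset{\mathbb P}^{k-1}$, and the $i$-th coordinate of $c_v$ vanishes precisely when $P_i\in H_v$; this does not depend on the chosen representative of the point. Therefore
\[
\operatorname{wt}(c_v)=n-\#\{\,i : P_i\in H_v\,\}.
\]
Minimizing over all $v\neq 0$, and using that $v\mapsto H_v$ surjects from ${\mathbb K}^k\setminus\{0\}$ onto the set of all hyperplanes of ${\mathbb P}^{k-1}$, we obtain
\[
d=\min_{v\neq 0}\operatorname{wt}(c_v)=n-\max_{H}\#\{\,i : P_i\in H\,\}=n-\hyp(X_{\C}),
\]
the maximum being taken over all hyperplanes $H$, which is exactly the definition of $\hyp(X_{\C})$.

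I do not expect a genuine obstacle in this argument; the only points worth stating carefully are that the full rank of $G$ is what guarantees the minimum in $d=\min_{v\neq 0}\operatorname{wt}(c_v)$ is really over all nonzero $v$ (so that no spurious zero codeword is counted), and that the correspondence between nonzero linear forms over ${\mathbb K}$ and hyperplanes of ${\mathbb P}^{k-1}$ is valid over an arbitrary field, so algebraic closedness of ${\mathbb K}$ is never needed. The inequality $\hyp(X_{\C})<n$ noted before the statement then simply re-derives $d\geq 1$.
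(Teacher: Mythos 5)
Your argument is correct and complete: it is the standard proof that the weight of the codeword $vG$ equals $n$ minus the number of points of $X_{\C}$ on the hyperplane $v\cdot x=0$, so minimizing weight over nonzero $v$ amounts to maximizing the point count over hyperplanes. The paper itself gives no proof of this proposition, deferring to Part 1.1.2 of the reference by Tsfasman--Vladut--Nogin, and your argument is precisely the one found there, with the relevant care taken about full rank of $G$ and the field being arbitrary.
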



\subsection{Evaluation Codes.} \label{sec_evalcodes}

	Recall if \(P=[a_1,\ldots,a_k]\in{\mathbb P}^{k-1}\) is a point, then the ideal of \(P\) in \(R:={\mathbb K}[x_1,\ldots,x_k]\) is \(I(P)=\langle\{a_ix_j-a_jx_i | i,j\in\{1,\ldots,k\}, i\neq j\}\rangle\). Let \(X=\{P_1,\ldots,P_n\}\subset{\mathbb P}^{k-1}\) be a finite set of points. The defining ideal of \(X\) is \(I(X)=I(P_1)\cap\cdots\cap I(P_n)\subset R\). For an integer \(a\geq 1\), we let \(R_a\) be the \({\mathbb K}\)-vector space of homogeneous polynomials in \(R\) of degree \(a\).
	
	Suppose we choose the standard representatives for the homogeneous coordinates of the points \(P_i\) (i.e., the leftmost nonzero coordinate of each point is 1). This allows us to make a well-defined \({\mathbb K}\)-linear map:
	\[
		\ev_a: R_a\longrightarrow {\mathbb K}^n \ \text{by} \ \ev_a(f)=(f(P_1),\ldots, f(P_n)).
	\]
	We can also define \(\ev_a\) without relying on a standard representation of the points, by first choosing \(f_0\in R_a\) which does not vanish at any of the points \(P_i\), then defining \(\displaystyle \ev_a(f)=\left(\frac{f(P_1)}{f_0(P_1)},\ldots,\frac{f(P_n)}{f_0(P_n)}\right)\) for \(f\in R_a\). What follows is independent of the choice of \(f_0\).

	The image of \(\ev_a\), denoted \(\mathcal C(X)_a\), is {\em the evaluation code of degree/order \(a\) associated to \(X\)}. The parameters of \(\mathcal C(X)_a\) are captured in the following proposition:

\begin{prop} \label{prop_paramEvaluation}
	Let \(X\subset{\mathbb P}^{k-1}\) be a finite set of \(n\) points, and let \(a\geq 1\) be an integer. Let \(I(X)\subset R:={\mathbb K}[x_1,\ldots,x_k]\) be the defining ideal of \(X\). Then, the basic parameters of \(C(X)_a\), the evaluation code of degree \(a\) associated to \(X\), are the following:
	\begin{itemize}
		\item Length: \(n=|X|=\deg(I(X))\).
		\item Dimension: \(k(X)_a:=\HF(R/I(X),a)\), the Hilbert function of \(R/I(X)\) evaluated at \(a\).
 		\item Minimum distance: \(\displaystyle d(X)_a:=n-\max_{X'\subset X}\{|X'|\,|\, \dim_{\mathbb K}(I(X')_a)\gneq\dim_{\mathbb K}(I(X)_a)\}\).
	\end{itemize}
\end{prop}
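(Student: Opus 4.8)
The plan is to verify the three listed parameters in turn; the first two are immediate and the third needs a little bookkeeping. For the length, $\mathcal{C}(X)_a$ is by definition a ${\mathbb K}$-subspace of ${\mathbb K}^n$, so it is a code of length $n=|X|$, and $|X|=\deg(I(X))$ is the standard fact that the defining ideal of a reduced $0$-dimensional scheme of $n$ distinct points has degree $n$ (its Hilbert polynomial is the constant $n$). For the dimension, I would identify the kernel of the ${\mathbb K}$-linear map $\ev_a\colon R_a\to{\mathbb K}^n$: a homogeneous polynomial of degree $a$ lies in $\ker\ev_a$ exactly when it vanishes at every point of $X$, i.e.\ when it lies in $I(X)_a$, and this description is independent of the chosen representatives of the $P_i$ (equivalently, of the normalizing $f_0$). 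Rank--nullity then gives $\dim_{\mathbb K}\mathcal{C}(X)_a=\dim_{\mathbb K}R_a-\dim_{\mathbb K}I(X)_a=\dim_{\mathbb K}(R/I(X))_a=\HF(R/I(X),a)=k(X)_a$. One should note in passing that $\HF(R/I(X),a)\ge 1$ for every $a\ge 1$, since at any point of $X$ some $x_j^a$ does not vanish; thus $\mathcal{C}(X)_a$ is nonzero and the quantities appearing below range over nonempty sets.

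For the minimum distance I would use that, for a linear code, the minimum distance equals the least Hamming weight of a nonzero codeword. Every codeword of $\mathcal{C}(X)_a$ has the form $\ev_a(f)$ with $f\in R_a$; it is nonzero precisely when $f\notin I(X)_a$; and, writing $Z_X(f):=\{P\in X\mid f(P)=0\}$ (well defined since $f$ is homogeneous), its weight is $n-|Z_X(f)|$. Hence $d(X)_a=n-\max\{\,|Z_X(f)|\mid f\in R_a\setminus I(X)_a\,\}$, and it remains only to match this maximum with the one in the statement.

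That identification is the one step that needs care: I claim $M:=\max\{|Z_X(f)|\mid f\in R_a\setminus I(X)_a\}$ equals $M':=\max\{|X'|\mid X'\subset X,\ \dim_{\mathbb K}I(X')_a>\dim_{\mathbb K}I(X)_a\}$, and I would prove it by two inequalities, using throughout that $X'\subseteq X$ forces $I(X)_a\subseteq I(X')_a$. If $f\in R_a\setminus I(X)_a$ realizes $M$, then $X':=Z_X(f)$ is a proper subset of $X$ with $f\in I(X')_a\setminus I(X)_a$, so $I(X')_a\supsetneq I(X)_a$ and $M=|X'|\le M'$. Conversely, if $X'\subset X$ realizes $M'$, the strict inclusion $I(X)_a\subsetneq I(X')_a$ provides some $f\in I(X')_a\setminus I(X)_a$; this $f$ vanishes on all of $X'$, so $X'\subseteq Z_X(f)$ and $M'=|X'|\le|Z_X(f)|\le M$. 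Hence $M=M'$ and $d(X)_a=n-M'$, as claimed. The main obstacle is thus organizational rather than conceptual: maintaining the dictionary between degree-$a$ polynomials that do not vanish identically on $X$ and the proper subsets of $X$ they cut out, and observing that the Hilbert-function inequality in the statement is precisely the condition that guarantees such a polynomial exists.
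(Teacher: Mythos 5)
Your proof is correct and follows essentially the same route as the paper: the length and dimension arguments are identical (rank--nullity applied to $\ev_a$ with kernel $I(X)_a$), and the minimum-distance argument is the same dictionary between nonzero codewords $\ev_a(f)$ and the proper subsets $Z_X(f)\subsetneq X$ they cut out. If anything, you are slightly more thorough than the paper, which only writes out the direction from a codeword to a subset $X'$ and leaves the converse inequality (that any $X'$ with $I(X')_a\supsetneq I(X)_a$ yields a codeword of weight at most $n-|X'|$) implicit in the phrase about maximizing $|X'|$.
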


\begin{proof}
	The first item is immediate: the cardinality of a finite set of points is the degree of the defining ideal of the set. (See Lemma \ref{lem_reminders}(i).)

	For the second statement, \(R_a/\ker(\ev_a)\simeq C(X)_a\). At the same time, \(f\in\ker(\ev_a)\) if and only if \(f\in I(X)_a\). The claim follows from the definition of the Hilbert function.

	The third statement was observed first in \cite[Proposition 6]{Ha1}. Let \({\bf w}\in C(X)_a\) be a codeword of weight \(s\geq 1\). Then \({\bf w}=(f(P_1),\ldots,f(P_n))\), where \(f\in R_a\) with \(f(P_{i_1}),\ldots,f(P_{i_s})\neq 0\) and \(f(P_j)=0\) for all \(j\in\{1,\ldots,n\}\setminus\{i_1,\ldots,i_s\}\). Let \(X':=\{P_j| j\in\{1,\ldots,n\}\setminus\{i_1,\ldots,i_s\}\}\). We have \(X'\subsetneq X\), \(f\in I(X')_a\setminus I(X)_a\), and \(s=n-|X'|\). Since \(X'\subset X\), then \(I(X)_a\subset I(X')_a\), with strict containment because of \(f\). Therefore \(\dim(I(X')_a)\gneq\dim(I(X)_a)\). If we want to minimize \(s\geq 1\), we must maximize \(|X'|\), where \(X'\) has the properties listed previously.
\end{proof}

\begin{rem} \label{rem_evalcodes}
	The following facts are useful to better understand evaluation codes and their minimum distances.
	\begin{itemize}
		\item[(1)] If we define \(\hyp(X)_a:= \max_{X'\subset X}\{|X'|\,|\, \dim_{\mathbb K}(I(X')_a)\gneq\dim_{\mathbb K}(I(X)_a)\}\), then \(\hyp(X)_a\) is the maximum size of a proper subset of \(X\) which is contained in a hypersurface of degree \(a\). Notice that \(\hyp(X)_1 = \hyp(X)\) when \(X\) is not contained in a hyperplane.
	
		\item[(2)] In the spirit of the Cayley-Bacharach theorems (see \cite{EiGrHa}), the geometric interpretation of \(d(X)_a\) and \(\hyp(X)_a\) is the following: any hypersurface of degree \(a\) which passes through at least \(\hyp(X)_a+1 = |X|-d(X)_a+1\) points of \(X\) should pass through \textit{all} points of \(X\).

		\item[(3)] For \(a=1\), if we use the (standard) \({\mathbb K}\)-basis \(\{x_1,\ldots,x_k\}\) of \(R_1\), the matrix representation of \(\ev_1\) with respect to this basis is exactly the matrix having as columns the coordinates of the points of \(X\). Hence \(d(X)_1\) is the minimum distance of the linear code defined by \(X\).

		\item[(4)] Generalizing item (3) above, for \(a\geq 1\), the matrix of \(\ev_a\) in standard bases of \(R_a\) and \({\mathbb K}^n\) has columns the homogeneous coordinates of the \(n\) points of \(v_a(X)\subset {\mathbb P}^{N_a-1}\), where \(\displaystyle N_a:=\dim_{\mathbb K}R_a={{k-1+a}\choose{a}}\) and \(v_a:{\mathbb P}^{k-1}\longrightarrow {\mathbb P}^{N_a-1}\) is the Veronese embedding. In order to apply Proposition \ref{prop_mindistpoints} to compute \(d(X)_a\), first we must embed \(v_a(X)\) into the smaller dimensional projective space \({\mathbb P}^{k(X)_a-1}\), where \(v_a(X)\) is nondegenerate (i.e., is not all contained in a hyperplane of this smaller projective space).
	\end{itemize}
\end{rem}


\subsection{Some Homological/Commutative Algebra.} \label{sec_commalg}

	We now present some homological/commutative algebraic concepts that we will use throughout the remainder of this paper.

	Let \(R = \bigoplus_{i\geq 0} R_i\) be a graded ring. Suppose \(M=\bigoplus_{i\geq 0}M_i\) is a finitely generated, (positively) graded \(R\)-module. The {\em initial degree} of \(M\) is \(\displaystyle\alpha(M):=\min\{i|M_i\neq 0\}\). If \(X\subset{\mathbb P}^{k-1}\) is a finite set of points, then we denote
	\[
		\alpha(X):=\alpha(I(X)),
	\]
where \(I(X)\) is the defining (homogeneous) ideal of \(X\) in the graded ring \(R := {\mathbb K}[x_1,\ldots,x_k]\), with the standard grading (by the degree). In this context, \(\alpha(X)\) is the smallest degree among nonzero elements of \(I(X)\).

	Let \(A\) be a Noetherian graded ring and suppose \(J\subset A\) is a homogeneous ideal of maximal height, i.e., \(A/J\) is an Artinian ring. If \({\mathfrak m}\) is the irrelevant ideal of \(A\), define {\em the socle of \(A/J\)} to be the graded \(A/J\)-module \(\displaystyle {\rm Soc}(A/J):=\frac{J:{\mathfrak m}}{J}\), and {\em the minimum socle degree} to be \(s(A/J):=\alpha({\rm Soc}(A/J))\).

	Let \(X=\{P_1,\ldots,P_n\}\subset{\mathbb P}^{k-1}\) be a finite set of points. Let \(I:=I(X)\); then \({\rm ht}(I)=k-1\). Let \(L\in R_1\) be a linear form such that \(L(P_i)\neq 0\) for all \(i=1,\ldots,n\), meaning \(L\) is a non-zerodivisor on \(R/I\).\footnote{By \cite{Schenck}, \(f\in R\) is  a \textit{non-zerodivisor} on the \(R\)-module \(M\) if \(f\cdot m\neq 0\) for all nonzero \(m\in M\). Other sources call such an \(f\) an \textit{\(M\)-regular element} of \(R\).} If needed (i.e., if \({\mathbb K}\) is too small), we can safely choose \(L\) to have coefficients in \(\overline{\mathbb K}\); see the comments we made at the end of the Introduction.

Let \(A = R/\langle L\rangle\) and let \(J = \langle I, L\rangle/ \langle L\rangle\). Since \(A/J\simeq R/\langle I,L\rangle\) as rings, and \({\rm ht}(\langle I,L\rangle) = k = \operatorname{dim}(R)\), then \(A/J\) is an Artinian ring called {\em an Artinian reduction of \(I\)}. Then we define {\em the minimum socle degree of \(X\)} to be
	\[
		s(X):=s(A/J).
	\]
See \cite{MigPat} for more details on Artinian reductions. The takeaway is that \(R/I\) as an \(R\)-module and \(A/J\) as an \(A\)-module have the same graded Betti numbers.

	The minimum socle degree \(s(X)\) can be also obtained in the following way: Because \(R/I\) is arithmetically Cohen-Macaulay, \(R/I\) has a minimal graded free resolution of length \(k-1\),
	\begin{align}
		0&\rightarrow {\bf F}_{k-1}\rightarrow {\bf F}_{k-2}\rightarrow\cdots\rightarrow {\bf F}_1\rightarrow R\rightarrow R/I\rightarrow 0, \label{eqn_free_res}
	\end{align}
where each \({\bf F}_i\) is a graded free \(R\)-module. If the leftmost nonzero module is
	\[
		{\bf F}_{k-1}=\bigoplus_{j=1}^tR(-b_j),
	\]
with \(k-1\leq b_1\leq\cdots\leq b_t\), then
	\[
		s(X)=b_1-k+1 = b_1 - (k-1).
	\]
Details of the proof of this equality can be found, for example, in \cite[Lemma 2.61]{ToB}.

	The number \(\reg(X):=b_t-k+1\) is the {\em (Castelnuovo-Mumford) regularity of \(X\)}, and one has that \(\HF(R/I,\reg(X))=\deg(I)=|X|=n\) and \(\HF(R/I,\reg(X)-1)<\deg(I)\). See \cite[Theorem 4.2 Part (3)]{EiGeo} for details.

	We have the inequalities
	\[
		\reg(X)\geq s(X)\geq \alpha(X)-1.
	\]
	The first inequality is immediate, and the second inequality is true because we start with a \textit{minimal} free resolution, and therefore the minimum shift at \({\bf F}_i\) is strictly bigger than the minimum shift at \({\bf F}_{i-1}\).

\medskip

\begin{defn}\label{defn_generic}
	Let \(X=\{P_1,\ldots, P_n\}\) be a set of \(n\) points in \({\mathbb P}^{k-1}\). Let \(R:={\mathbb K}[x_1,\ldots,x_k]\) and let \(I = I(X)\) be the defining ideal of \(X\).
	\begin{itemize}
		\item[(1)] \(X\) is said to be {\em in generic \(n\)-position}, or simply {\em in generic position}, if
						\[
							\HF(R/I,i)=\min\left\{n,{{i+k-1}\choose{k-1}}\right\}
						\]
				for all \(i\geq 0\).

		\item[(2)] \(X\) is said to be {\em in general linear position} if any \(u\leq k\) points of \(X\) will span a \({\mathbb P}^{u-1}\); equivalently, any representatives of the homogeneous coordinates of any \(u\leq k\) of the points of \(X\) are linearly independent (affine) vectors in \({\mathbb K}^k\).

	\end{itemize}
\end{defn}

	Sets of points \(X\) in generic position or general linear position have been studied before. However, despite the apparent ``simple'' geometric/algebraic features, some information about these kinds of sets still elude interested researchers. The best example is what is know as ``The Minimal Resolution Conjecture'' (\cite{Lo2}). Also, Problems (A) and (B) in \cite{GeMa} are the best reflection of the entire discussion around these issues. In the remark below we will present some of the information that is known about such \(X\).

\begin{rem}\label{rem_generic}
	Let \(X=\{P_1,\ldots, P_n\}\) be a set of \(n\) points in \({\mathbb P}^{k-1}\), not all contained in a hyperplane.
	\begin{itemize}
		\item[(i)] \(X\) is in general linear position if and only if the linear code with generator matrix having as columns any representatives of the coordinates of the points of \(X\) is a Maximum Distance Separable (MDS) code (i.e., an \([n,k,d] = [n,k,n-k+1]\)-linear code).

		\item[(ii)] As \cite[Remarks 1.7 (3)]{GeMa} mentions, there exist sets of points \(X\) that are in general linear position but not in generic position, and also sets of points in generic position that are not in general linear position. An example of the latter is a set \(X\) of 5 points in \({\mathbb P}^2\), where 3 of the points are on a line, and the other 2 are off that line. Because of this extra collinearity, \(X\) is not in general linear position, but from the Hilbert function values
				\[
					H(R/I(X),i) =		\begin{cases}
										1 = {{0+2}\choose{2}} &\mbox{for}\ \ i = 0,\\
										3 = {{1+2}\choose{2}} &\mbox{for}\ \ i = 1,\\
										5 = |X| &\mbox{for}\ \ i \geq 2
									\end{cases}
				\]
				we have that \(X\) is in generic position.

		\item[(iii)] Let \(\C\) be an \([n,k,d]\)-linear code with defining matrix \(G\) and corresponding set of points \(X = X_\C\subset {\mathbb P}^{k-1}\) as described at the start of Section \ref{sec_prelim}. For \(X\) to be in general linear position, Definition \ref{defn_generic} (2) says at least one maximal minor of \(G\) is nonzero. A maximal minor of \(G\) being nonzero means that the point representing \(G\) in \({\mathbb P}^{kn-1}\) belongs to the complement of the variety defined by the vanishing of that maximal minor, i.e. an open subset. When working over the algebraic closure, hence an infinite field, the complement of a variety is dense in the Zariski topology.
		
		\item[(iv)] Suppose \(X\) is a set of \(\displaystyle {{m-1+k-1}\choose{k-1}}\) points in generic position. Then, by \cite[Proposition 1.4]{GeMa}, \(\alpha(X)=m\) and the minimum number of generators of \(I = I(X)\) is \(\displaystyle \mu(I)={{m-1+k-1}\choose{k-2}}\). By \cite[Exercise 7.1.9]{Schenck} or \cite[Theorem 2.2]{Lo1}, \(\reg(X)=m-1\). With this, from the inequality \(\reg(X)\geq \alpha(X)-1\) we have seen above, we must have \(\reg(X)=\alpha(X)-1=m-1\), hence \(I\), and therefore \(R/I\), has linear graded free resolution.
		
		\item[(v)] A finite set of points \(X\subset {\mathbb P}^{k-1}\) is in \textit{uniform position} if \(X\) and each subset of \(X\) is in generic position. By \cite[Corollary 14]{Ha1}, for such a set \(X\), \(C(X)_a\) is an MDS code, meaning
			\[
				d(X)_a =	\left|X\right| - \HF(R/I(X),a)+1	=	\begin{cases}
														\left|X\right| - {a+k-1\choose k-1}+1, &\mbox{if } a < \reg(X),\\
														1, &\mbox{if } a\geq \reg(X).
													\end{cases}
			\]
	\end{itemize}
\end{rem}

\medskip

	Another very important homological invariant associated to a finite set of points is {\em the \({\bf v}\)-number} (this can be defined in general for any homogeneous ideal; see \cite[Section 4.1]{CSTVV}): if \(X=\{P_1,\ldots,P_n\}\subset {\mathbb P}^{k-1}\) is a finite set of points with defining ideal \(I:=I(X)\subset R:={\mathbb K}[x_1,\ldots,x_k]\), then
	\[
		{\bf v}(X):=\min\left\{\left.\alpha\left(\dfrac{I:I(P_i)}{I}\right)\right|i=1,\ldots,n\right\},
	\]
where \(I(P_i)\) is the ideal of the point \(P_i\in X\).\footnote{This definition is in fact \cite[Proposition 4.2]{CSTVV}.}

	For each \(i=1,\ldots,n\), the number \(\alpha((I:I(P_i))/I)\) is called {\em the minimum degree of a separator of the point \(P_i\)} (see \cite{GeKrRo}). If the characteristic of \({\mathbb K}\) is zero, by \cite[Theorem 3.3]{ToVa}, which is an immediate consequence of \cite[Theorem 5.4]{GuMaVa}, we have \({\bf v}(X)\geq s(X)\).\footnote{The characteristic zero restriction was necessary in \cite[Theorem 5.4]{GuMaVa} to show that the minimum degrees of separators of the points \(P_i\) can be recovered from the shifts in the tail \({\bf F}_{k-1}\).} However, by \cite[Theorem 4.10]{CSTVV}, one can drop the characteristic zero restriction:
	\[
		\reg(X)\geq {\bf v}(X)\geq s(X).
	\]

\begin{rem}\label{rem_vnumber}
	Let \(X\subset{\mathbb P}^{k-1}\), with \(k\geq 3\), be a set of \(n\) points. Then:
	\begin{itemize}
		\item[(i)] Let \(a\geq 1\) be an integer. By Proposition \ref{prop_paramEvaluation}, from the Singleton bound we have \(1\leq d(X)_a\leq |X|-\HF(R/I(X),a)+1\). So, if \(a\geq \reg(X)\), we obtain that \(d(X)_a=1\).
  		\item[(ii)] Even more generally, by \cite[Proposition 4.6]{CSTVV}, \({\bf v}(X)\) is the smallest integer \(a\) such that \(d(X)_{a}=1\). So, by \cite[Proposition 2.1]{To4}, one has
  			\[
  				d(X)_1>d(X)_2> \cdots >d(X)_{{\bf v}(X)-1}>d(X)_{{\bf v}(X)}=\cdots=d(X)_{\reg(X)}=\cdots \,=1.
			\]
			Though we will not make use of \({\bf v}(X)\) in this paper, this last remark should be convincing enough that this homological invariant is very important in the theory of evaluation codes.
	\end{itemize}
\end{rem}


\section{The minimum distance and the initial degree of points} \label{sec_mindistalpha}

	To begin, we provide a lemma that summarizes known results. This lemma will be useful later in the section.

\begin{lem}\label{lem_reminders}
	Let \(Z\subseteq {\mathbb P}^{k-1}\) be a finite set of points and let \(R = {\mathbb K}[x_1,\dots,x_k]\) be the ring of polynomials in \(x_1,\dots,x_k\) over the field \({\mathbb K}\). Then the Hilbert function and initial degree have the following properties.
	\begin{itemize}
		\item[(i)] For \(i >> 0\), the Hilbert function satisfies \(\HF(R/I(Z), i) = |Z|\).
		\item[(ii)] The Hilbert function \(\HF(R/I(Z), i)\) is nondecreasing in \(i\).
		\item[(iii)] If \(Z'\subsetneq Z\), then \(\alpha(Z')\leq \alpha(Z)\leq \alpha(Z')+\alpha(Z\setminus Z')\).
	\end{itemize}
\end{lem}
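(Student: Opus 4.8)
The plan is to prove the three parts in order, each being a short argument. Part (i) is essentially the statement that the Hilbert function of a reduced zero-dimensional scheme eventually stabilizes at the degree: since $R/I(Z)$ is a one-dimensional Cohen--Macaulay ring (a finite set of points is arithmetically Cohen--Macaulay), its Hilbert polynomial is the constant $\deg(I(Z)) = |Z|$, and the Hilbert function agrees with the Hilbert polynomial for $i \gg 0$. Alternatively, one can invoke the regularity statement quoted in the excerpt: $\HF(R/I(Z), \reg(Z)) = |Z|$ together with part (ii).

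For part (ii), I would use the standard trick of a general linear form $L \in R_1$ that is a non-zerodivisor on $R/I(Z)$ (such an $L$ exists, possibly after extending to $\overline{\mathbb K}$, exactly as discussed in \S\ref{sec_commalg}: take $L$ not vanishing at any point of $Z$). Multiplication by $L$ gives an injection $(R/I(Z))_{i-1} \hookrightarrow (R/I(Z))_i$ for every $i$, hence $\HF(R/I(Z), i-1) \leq \HF(R/I(Z), i)$. This is the monotonicity claim.

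For part (iii), write $Z = Z' \sqcup W$ with $W = Z \setminus Z'$, so $I(Z) = I(Z') \cap I(W)$ and $I(Z') \cdot I(W) \subseteq I(Z)$. The left inequality $\alpha(Z') \leq \alpha(Z)$ follows from $I(Z) \subseteq I(Z')$. For the right inequality, pick $f \in I(Z')$ with $\deg f = \alpha(Z')$ and $g \in I(W)$ with $\deg g = \alpha(W)$; then $fg \in I(Z') \cdot I(W) \subseteq I(Z)$ is a nonzero element of degree $\alpha(Z') + \alpha(W)$, so $\alpha(Z) \leq \alpha(Z') + \alpha(W)$. One should note $W \neq \emptyset$ since $Z' \subsetneq Z$, so $\alpha(W)$ is well defined (and $\geq 1$, with $I(W) \subsetneq R$ unless $W$ is a single point sitting nowhere—actually $I(W)$ always has no degree-$0$ part, so $\alpha(W) \geq 1$).

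The arguments are all routine; the only point requiring a little care is the existence of the non-zerodivisor linear form $L$ in part (ii) when ${\mathbb K}$ is finite, which is handled by the base-change remarks already made in the Introduction and in \S\ref{sec_commalg} (the Hilbert function is unchanged under extension of scalars to $\overline{\mathbb K}$). I expect no genuine obstacle here; this lemma is a collection of well-known facts assembled for later reference.
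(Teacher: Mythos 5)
Your proposal is correct, and for parts (ii) and (iii) it matches the paper's argument essentially verbatim: monotonicity via multiplication by a linear non-zerodivisor on $R/I(Z)$ (the paper phrases the key point as $I(Z):L = I(Z)$, which is exactly the injectivity you use), and the product $fg \in I(Z') \cdot I(Z\setminus Z') \subseteq I(Z)$ for the subadditivity of $\alpha$. The only divergence is in part (i): you invoke the stabilization of the Hilbert function at the (constant) Hilbert polynomial of the one-dimensional Cohen--Macaulay ring $R/I(Z)$, whereas the paper sketches an induction on $|Z|$ using the exact sequence
\[
0 \to R/(J\cap I(P_m)) \to (R/J)\oplus (R/I(P_m)) \to R/(J+I(P_m)) \to 0
\]
with $J = \bigcap_{j<m} I(P_j)$, which has the advantage of directly producing the value $|Z|$ rather than presupposing the identification $\deg(I(Z)) = |Z|$. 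Both routes are standard and acceptable here; just note that your fallback argument (regularity value plus monotonicity) by itself only gives $\HF(R/I(Z),i)=|Z|$ at $i=\reg(Z)$ and needs the eventual constancy (or the upper bound $\HF \le |Z|$) to conclude for all larger $i$.
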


\begin{proof}
	(i) This is a classic result; for example, this is Exercise 2.3.5 in \cite{Schenck}. Proof when \(|Z| = 1\) is nearly immediate. The proof is then completed by induction on \(|Z|\). Key to the proof is the observation that if \(Z = \{P_1,\dots,P_m\}\), so that \(I(Z) = \bigcap_{j = 1}^m I(P_j)\), and if \(J = \bigcap_{j=1}^{m-1} I(P_j)\), then the sequence
			\[
				0 \to R/I(Z) = R/(J\cap I(P_m)) \to (R/J)\oplus (R/I(P_m)) \to R/(J+I(P_m)) \to 0
			\]
			is exact.

	(ii) This claim is true for all Cohen-Macaulay varieties \(Z\). The specific case in the statement of the lemma is seen in \S 3.3 of \cite{Schenck}. Key to this proof is the fact that for a non-zerodivisor \(f\) on \(R/I(Z)\), we have \(I(Z): f = I(Z)\).
	
	(iii) If \(Z'\subsetneq Z\) then \(I(Z)\subseteq I(Z')\), thus \(\alpha(Z')\leq \alpha(Z)\). For the second part on the inequality, choose \(f\in I(Z')\) with minimal degree \(\alpha(Z')\) and choose \(g\in I(Z\setminus Z')\) with minimal degree \(\alpha(Z\setminus Z')\). Then \(fg\in I(Z)\), so \(\alpha(Z)\leq \deg(fg) = \alpha(Z')+\alpha(Z\setminus Z')\).
\end{proof}

\begin{prop}\label{prop_alpha}
	Let \(X\subset{\mathbb P}^{k-1}, k\geq 3\), be a set of \(n\) points, not all contained in a hyperplane (so \(\alpha(X)\geq 2\)). Let \(1\leq a\leq \alpha(X)-1\). Then
	\[
		d(X)_a+u\geq(k-1)(\alpha(X)-a)
	\]
for some \(u\in\{0,\ldots,k-2\}\).
\end{prop}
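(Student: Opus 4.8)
The plan is to exploit Proposition \ref{prop_paramEvaluation} together with the geometric fact highlighted in the Introduction: any $n$ points in $\mathbb{P}^{k-1}$ can be covered by $\lceil n/(k-1)\rceil$ hyperplanes. First I would let $X'\subsetneq X$ be a subset realizing $\hyp(X)_a$, so that $d(X)_a = n - |X'|$ and there is a form $f\in I(X')_a\setminus I(X)_a$, i.e.\ a hypersurface of degree $a$ containing $X'$ but not all of $X$. Write $Z := X\setminus X'$, a nonempty set with $|Z| = d(X)_a$. The key claim to establish is a lower bound on $\alpha(Z)$: since every point of $Z$ lies off the degree-$a$ hypersurface $\{f=0\}$ that contains $X'$, and since $X = X'\sqcup Z$, Lemma \ref{lem_reminders}(iii) gives $\alpha(X) \le \alpha(X') + \alpha(Z)$. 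But $\alpha(X') \le a$ because $f\in I(X')_a$, so $\alpha(Z) \ge \alpha(X) - a$. (One should double-check the edge case $\alpha(X')=a$ exactly; the inequality $\alpha(X')\le a$ is what is needed, and it holds since $I(X')_a\ni f\neq 0$.)

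Next I would turn the bound $\alpha(Z)\ge \alpha(X)-a =: m$ into a lower bound on $|Z|$. A set $Z$ with initial degree $\ge m$ has no hypersurface of degree $<m$ through all its points; in particular $Z$ is not contained in $m-1$ hyperplanes, since the product of $m-1$ linear forms would be a degree-$(m-1)$ element of $I(Z)$. Combined with the covering fact — $Z$ can be placed on $\lceil |Z|/(k-1)\rceil$ hyperplanes — we get $\lceil |Z|/(k-1)\rceil \ge m$, equivalently $|Z| > (k-1)(m-1)$, i.e.\ $|Z| \ge (k-1)(m-1)+1$. Writing $d(X)_a = |Z|$ and $m = \alpha(X)-a$, this reads $d(X)_a \ge (k-1)(\alpha(X)-a-1)+1 = (k-1)(\alpha(X)-a) - (k-1) + 1$. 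Setting $u := (k-1) - 1 - \big(d(X)_a - (k-1)(\alpha(X)-a-1) - 1\big)$ or, more cleanly, rearranging to $d(X)_a + u \ge (k-1)(\alpha(X)-a)$ with $u = (k-1) - \big(|Z| - (k-1)(m-1)\big)$, one checks $u$ lies in $\{0,\dots,k-2\}$: indeed $|Z| - (k-1)(m-1) \ge 1$ forces $u \le k-2$, and $u\ge 0$ would follow from $|Z| \le (k-1)m$, which I would need to argue separately (e.g.\ if $|Z| > (k-1)m$ the conclusion $d(X)_a \ge (k-1)(\alpha(X)-a)$ holds outright with $u=0$). So the clean way to phrase it: if $d(X)_a \ge (k-1)m$ take $u=0$; otherwise $(k-1)(m-1) < d(X)_a < (k-1)m$ and the defect $u := (k-1)m - d(X)_a$ satisfies $1 \le u \le k-2$, done.

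The main obstacle I anticipate is the step bounding $|Z|$ from below via the covering fact. The statement "$n$ points in $\mathbb{P}^{k-1}$ lie on $\lceil n/(k-1)\rceil$ hyperplanes" needs the base field to be large enough to pick, at each stage, a hyperplane through $k-1$ chosen points (automatic) that is honest — this is fine since $k-1$ points always span a hyperplane — so one iteratively removes $k-1$ points at a time, and the only subtlety is that we want the bound on the \emph{number} of hyperplanes, which is purely combinatorial once we know any $k-1$ points span a hyperplane. I would state and prove this as a short separate lemma. A secondary subtlety is making sure the field-size issue raised at the end of the Introduction does not bite: here we only ever form hyperplanes \emph{through given points of $X$}, so no new points or genericity assumptions are needed, and the argument is valid over any $\mathbb{K}$. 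Finally I would double-check the degenerate possibilities $d(X)_a$ small (e.g.\ $d(X)_a \le k-2$), where the inequality $d(X)_a + u \ge (k-1)(\alpha(X)-a)$ with $a\le \alpha(X)-1$ still must be consistent; since $\alpha(X)-a \ge 1$ the right side is at least $k-1$, and $|Z|=d(X)_a$ with $\alpha(Z)\ge 1$ gives $|Z|\ge 1$, but the covering bound pushes $|Z|\ge (k-1)(\alpha(X)-a-1)+1$ which already handles $\alpha(X)-a\ge 2$; for $\alpha(X)-a=1$ the claim is just $d(X)_a + u \ge k-1$ with $u\le k-2$, i.e.\ $d(X)_a\ge 1$, trivially true.
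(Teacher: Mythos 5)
Your proposal is correct and follows essentially the same route as the paper: both arguments cover the complement \(Y=X\setminus X'\) by \(\lceil |Y|/(k-1)\rceil\) hyperplanes and multiply the degree-\(a\) form by the corresponding linear forms to force \(\alpha(X)\le a+\lceil |Y|/(k-1)\rceil\). Your repackaging via \(\alpha(Y)\ge\alpha(X)-a\) from Lemma~\ref{lem_reminders}(iii) is just a tidier phrasing of the same product construction (and is exactly how the paper itself argues in Theorem~\ref{thm_better}), and your bookkeeping for \(u\) and the small cases is sound.
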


\begin{proof}
	Suppose \(X=\{P_1,\ldots,P_n\}\). Let \(m:=d(X)_a\geq 1\). By Proposition \ref{prop_paramEvaluation}, let \(X'\subsetneq X\) be such that \(|X'|=n-m\), let \(Y:=X\setminus X'=\{Q_1,\ldots,Q_m\}\), and let \(f\in I(X')\) with \(\deg(f)=a\) and \(f(Q_j)\neq 0\) for all \(j=1,\ldots,m\) (from the maximality of \(|X'|\)).

	If \(m\leq k-1\), then \(Y\) will be contained in a hyperplane \(V(L)\), and so \(f\cdot L\in I(X)\), giving that \(\alpha(X)\leq a+1\), i.e. \(\alpha(X)-1\leq a\). Thus \(a = \alpha(X)-1\) and the claim is satisfied by using \(u = k-2\).

	If \(m\geq k\), set \(\displaystyle \delta := \lceil m/(k-1)\rceil\). Note \(\delta\cdot(k-1) = m+u\) for some \(u\in \{0,\ldots,k-2\}\). Any \(k-1\) points of \(Y\) will belong to a hyperplane, so there will be a union of \(\delta\) hyperplanes \(V(L_1),\ldots,V(L_{\delta})\subset{\mathbb P}^{k-1}\) which contains the points of \(Y\). This tells us that the product \(f\cdot L_1\cdots L_{\delta}\) belongs to \(I(X)\), and so \(\delta+a\geq \alpha(X)\). Hence, in this case, \(m+u\geq (k-1)(\alpha(X)-a)\) for some \(u\in\{0,\ldots,k-2\}\).
\end{proof}

	Since \(u\leq k-2\), and \(d(X)_a\geq 1\), we have the following corollary. One should note that if \(a\geq \alpha(X)\), then the bound in this corollary becomes \(d(X)_a\geq 0\), which is trivially satisfied. Therefore, in the statement below we decided not to bound the range of the values for \(a\) from above.

\begin{cor}\label{cor_alphabound}
	Let \(X\subset{\mathbb P}^{k-1}, k\geq 3\), be a set of \(n\) points, not all contained in a hyperplane. Let \(1\leq a \) be an integer. Then,
	\[
		d(X)_a\geq (k-1)(\alpha(X)-1-a)+1.
	\]
\end{cor}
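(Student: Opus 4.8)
The plan is to deduce Corollary \ref{cor_alphabound} directly from Proposition \ref{prop_alpha}. Proposition \ref{prop_alpha} gives, for any integer $1\leq a\leq \alpha(X)-1$, the inequality $d(X)_a+u\geq (k-1)(\alpha(X)-a)$ for some $u\in\{0,\ldots,k-2\}$. First I would rewrite this as $d(X)_a\geq (k-1)(\alpha(X)-a)-u$. Since $u\leq k-2$, this yields $d(X)_a\geq (k-1)(\alpha(X)-a)-(k-2)$. Then I would simplify the right-hand side: $(k-1)(\alpha(X)-a)-(k-2)=(k-1)(\alpha(X)-a)-(k-1)+1=(k-1)(\alpha(X)-a-1)+1=(k-1)(\alpha(X)-1-a)+1$, which is exactly the claimed bound.

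Next I would handle the range of $a$ not covered by Proposition \ref{prop_alpha}. If $a\geq \alpha(X)$, then $\alpha(X)-1-a\leq -1<0$, so $(k-1)(\alpha(X)-1-a)+1\leq (k-1)(-1)+1=2-k\leq 0$ since $k\geq 3$; as $d(X)_a\geq 1$ always holds (it is a positive integer, being the minimum weight of a nonzero code, or more simply $d(X)_a\geq 1$ as noted in Remark \ref{rem_vnumber}(i)), the inequality is trivially satisfied. This is precisely the observation made in the sentence preceding the corollary statement, so it needs only to be spelled out. One subtle point to double-check: Proposition \ref{prop_alpha} assumes $X$ is not all contained in a hyperplane, which forces $\alpha(X)\geq 2$; the corollary carries the same hypothesis, so the case $a=1$ (always in range since $\alpha(X)-1\geq 1$) is covered, and there is no edge case where the range $1\leq a\leq\alpha(X)-1$ is empty.

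There is essentially no obstacle here — the corollary is a one-line arithmetic consequence of the proposition together with the trivial bound $d(X)_a\geq 1$. The only thing to be careful about is the bookkeeping on the floor/ceiling manipulation hidden inside Proposition \ref{prop_alpha} (the $u$ coming from $\lceil m/(k-1)\rceil\cdot(k-1)=m+u$), but that work is already done in the proof of the proposition, so I may simply invoke it. The write-up would be: apply Proposition \ref{prop_alpha} for $1\leq a\leq\alpha(X)-1$, bound $u$ above by $k-2$, algebraically rearrange, and then dispatch the case $a\geq\alpha(X)$ by noting the right-hand side is nonpositive while $d(X)_a\geq 1$.

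\begin{proof}
If $1\leq a\leq \alpha(X)-1$, then by Proposition \ref{prop_alpha} there is some $u\in\{0,\ldots,k-2\}$ with $d(X)_a+u\geq (k-1)(\alpha(X)-a)$. Since $u\leq k-2$, we obtain
	\[
		d(X)_a\geq (k-1)(\alpha(X)-a)-(k-2)=(k-1)\bigl(\alpha(X)-a-1\bigr)+1=(k-1)(\alpha(X)-1-a)+1.
	\]
If instead $a\geq \alpha(X)$, then $\alpha(X)-1-a\leq -1$, so $(k-1)(\alpha(X)-1-a)+1\leq -(k-1)+1=2-k\leq 0$ because $k\geq 3$. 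As $d(X)_a\geq 1$ (see Remark \ref{rem_vnumber}(i)), the stated inequality holds trivially in this case as well.
\end{proof}
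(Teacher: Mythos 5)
Your proposal is correct and matches the paper's own (very brief) justification: the authors likewise derive the corollary from Proposition \ref{prop_alpha} by bounding $u\leq k-2$ and rearranging, and dispatch the range $a\geq\alpha(X)$ by noting the right-hand side is nonpositive while $d(X)_a\geq 1$. Your write-up just makes the arithmetic explicit.
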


\begin{ex}\label{ex_boundcomp1}
	Consider the set of \(10\) points \(X= \{P_1,\dots,P_{10}\}\subset {\mathbb P}^2\) whose coordinates make up the columns of the matrix \(G\) below.

	\begin{center}
		\begin{tabular}{@{}m{\dimexpr.5\textwidth-.5\columnsep}m{\dimexpr.5\textwidth-.5\columnsep}@{}}
			\( \displaystyle
				G =	\begin{pmatrix}
						0 & 1 & 2 & 3 & 0 & 1 & 2 & 3 & 1 & 2\\
						0 & 0 & 0 & 0 & 3 & 3 & 3 & 2 & 1 & 1\\
						1 & 1 & 1 & 1 & 1 & 1 & 1 &1 & 1 & 1
					\end{pmatrix}
			\)
			&
			\begin{tikzpicture}
				\draw[smooth,thick,black,domain=-0.25:3.25,samples=400]
					plot (\x,{.5*(\x*\x-3*\x+4)});

				\draw[thick] (-1,0) -- (4,0);
				\draw[thick] (-1,3) -- (4,3);

				\filldraw[black] (0,0) circle (2pt) node[below]{\(P_1\)};
				\filldraw[black] (1,0) circle (2pt) node[below]{\(P_2\)};
				\filldraw[black] (2,0) circle (2pt) node[below]{\(P_3\)};
				\filldraw[black] (3,0) circle (2pt) node[below]{\(P_4\)};
				\filldraw[black] (0,3) circle (2pt) node[below]{\(P_5\)};
				\filldraw[black] (1,3) circle (2pt) node[below]{\(P_6\)};
				\filldraw[black] (2,3) circle (2pt) node[below]{\(P_7\)};
				\filldraw[black] (3,2) circle (2pt) node[below right]{\(P_8\)};
				\filldraw[black] (1,1) circle (2pt) node[below left]{\(P_9\)};
				\filldraw[black] (2,1) circle (2pt) node[below right]{\(P_{10}\)};
			\end{tikzpicture}
		\end{tabular}
	\end{center}

	The picture shows \(\alpha(X)\leq 4\), since the product of two linear forms and a conic belongs to \(I(X)\). One can use Macaulay 2 (\cite{GrSt}) to easily verify that \(\alpha(X) = 4\). From the picture it is immediate that \(\hyp(X) = 4\), so \(d(X)_1 = n - \hyp(X) = 10-4 = 6\). In this case,
	\[
		(k-1)(\alpha(X)-1-a)+1 = (3-1)(4-1-1)+1 = 5.
	\]
So \(d(X)_1 > (k-1)(\alpha(X)-1-a)+1\) for this set \(X\) and \(a = 1\).
\end{ex}

	Using more commutative algebra, we can obtain the following lower bound.

\begin{thm}\label{thm_better}
	Let \(X\subset{\mathbb P}^{k-1}, k\geq 3\), be a set of \(n\) points, not all contained in a hyperplane. Let \(1\leq a\leq \alpha(X)-1\) be an integer. Then,
	\[
		d(X)_a\geq {{\alpha(X)-1-a+k-1}\choose{k-1}}.
	\]
\end{thm}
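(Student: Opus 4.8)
The plan is to sharpen the argument behind Proposition~\ref{prop_alpha}. There, the complement \(Y := X\setminus X'\) of the support of a minimum-weight codeword was covered by a union of \(\lceil |Y|/(k-1)\rceil\) hyperplanes; instead I would cover \(Y\) by a \emph{single} hypersurface whose degree is controlled by the cheap linear-algebra fact that \(m\) points impose at most \(m\) linear conditions on forms of any fixed degree. Concretely, set \(m := d(X)_a\) and \(d := \alpha(X)-1-a\); the hypothesis \(a\leq \alpha(X)-1\) says exactly that \(d\geq 0\), and the goal becomes \(m\geq {{d+k-1}\choose{k-1}}\).

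First I would dispense with the preliminaries. Since \(a<\alpha(X)\) we have \(I(X)_a = 0\), so \(\HF(R/I(X),a) = \dim_{\mathbb K}R_a = {{a+k-1}\choose{k-1}}\geq 1\); thus \(\mathcal C(X)_a\neq 0\), and by the third item of Proposition~\ref{prop_paramEvaluation} we have \(m = d(X)_a\geq 1\) together with a proper subset \(X'\subsetneq X\) satisfying \(|X\setminus X'| = m\) and \(\dim_{\mathbb K}I(X')_a > \dim_{\mathbb K}I(X)_a = 0\). Fix a nonzero \(f\in I(X')_a\) and write \(Y := X\setminus X'\), a set of exactly \(m\) points. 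Note that, in contrast to Proposition~\ref{prop_alpha}, I will not need \(f\) to be nonvanishing on \(Y\), nor any position hypothesis on the points of \(Y\); only the cardinality of \(Y\) will matter.

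Then I would argue by contradiction: suppose \(m < {{d+k-1}\choose{k-1}}\). If \(d=0\) this forces \(m<1\), impossible, so \(d\geq 1\). The evaluation-at-\(Y\) map \(R_d\to {\mathbb K}^{m}\) has rank at most \(m < {{d+k-1}\choose{k-1}} = \dim_{\mathbb K}R_d\), so its kernel is nonzero; that is, there is a nonzero \(g\in I(Y)_d\). Now \(fg\) vanishes at every point of \(X'\cup Y = X\), hence \(fg\in I(X)\); and since \(R\) is a domain with \(f,g\neq 0\), the product \(fg\) is a nonzero form of degree \(a+d = \alpha(X)-1\) lying in \(I(X)\), contradicting the definition of \(\alpha(X)\). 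Therefore \(m = d(X)_a\geq {{d+k-1}\choose{k-1}} = {{\alpha(X)-1-a+k-1}\choose{k-1}}\), as claimed.

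I do not expect a serious obstacle here: the geometric input of Proposition~\ref{prop_alpha} is replaced by a one-line dimension count, which is precisely what upgrades the linear bound to the binomial one. The only items requiring care are the degenerate case \(a=\alpha(X)-1\) (i.e. \(d=0\), where the assertion is merely \(d(X)_a\geq 1\)) and the nonvanishing of the code, both handled above; and, as a consistency check, I would note that \({{t+k-1}\choose{k-1}}\geq (k-1)t+1\) for all integers \(t\geq 0\), so the new bound indeed recovers Corollary~\ref{cor_alphabound}.
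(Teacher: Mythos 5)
Your proof is correct, and it arrives at the bound by a route that is recognizably parallel to the paper's but more elementary in one key step. The paper argues directly: by Lemma~\ref{lem_reminders}(i),(ii) the Hilbert function \(\HF(R/I(Y),i)\) is nondecreasing with eventual value \(|Y|=m\), hence \(m\geq \HF(R/I(Y),\alpha(Y)-1)={\alpha(Y)-1+k-1\choose k-1}\), and then Lemma~\ref{lem_reminders}(iii) gives \(\alpha(Y)\geq\alpha(X)-a\). You run the same two ingredients in contrapositive form: your step ``\(m<\dim_{\mathbb K}R_d\) forces a nonzero \(g\in I(Y)_d\)'' is exactly the statement \(\HF(R/I(Y),d)\leq |Y|\), but obtained from rank--nullity for the evaluation map \(R_d\to{\mathbb K}^m\) rather than from the monotonicity and stabilization of the Hilbert function (which rest on Cohen--Macaulayness); and your final contradiction ``\(fg\in I(X)\) is a nonzero form of degree \(a+d=\alpha(X)-1\)'' is precisely the proof of Lemma~\ref{lem_reminders}(iii), i.e.\ \(\alpha(X)\leq a+\alpha(Y)\). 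So the mathematical content is the same --- the bound is \(\dim_{\mathbb K}R_{\alpha(X)-1-a}\) because \(Y\) cannot lie on a hypersurface of degree \(\alpha(X)-1-a\) --- but your version needs nothing about Hilbert functions of points beyond the trivial upper bound by \(|Y|\), and it handles the degenerate cases (\(d=0\), and the a priori existence of a nonzero codeword) explicitly. One small remark: you correctly observe that you never use the nonvanishing of \(f\) on \(Y\), only that \(f\) is a nonzero element of \(I(X')_a\) (which follows from \(I(X)_a=0\)); this is a genuine, if minor, weakening of the hypotheses carried over from Proposition~\ref{prop_alpha}.
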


\begin{proof}
	Define \(Y\) and \(m = |Y| = d(X)_a\) as in the proof of Proposition \ref{prop_alpha}. Recall \(R = {\mathbb K}[x_1,\dots,x_k]\). For convenience, label the proposed lower bound as \(\beta_a = \displaystyle {{\alpha(X)-1-a+k-1}\choose{k-1}}\). Then we want to show \(m \geq \beta_a\).
	
	By Lemma \ref{lem_reminders}(i, ii) we know \(\HF(R/I(Y),i)\) is nondecreasing in \(i\) and it is equal to \(m\) for \(i >> 0\). Thus it suffices to identify an integer \(i\) for which \(\HF(R/I(Y),i)\geq \beta_a\). The graded short exact sequence
	\[
		0 \to I(Y) \to R \to R/I(Y) \to 0
	\]
gives us
	\[
		\HF(R/I(Y),i) = \HF(R,i) - \HF(I(Y),i).
	\]
In particular, at \(i = \alpha(Y)-1\) we see \(HF(I(Y),\alpha(Y)-1) = 0\), so that
	\[
		\HF(R/I(Y),\alpha(Y)-1) = \HF(R,\alpha(Y)-1) = {{\alpha(Y)-1+k-1}\choose{k-1}}.
	\]
Since \(a\geq \alpha(X\setminus Y)\) (see the proof of Proposition \ref{prop_alpha} where \(X\setminus Y = X'\)), Lemma \ref{lem_reminders}(iii) tells us \(\alpha(Y)+a\geq \alpha(X)\), i.e., \(\alpha(Y)\geq \alpha(X)-a\). Therefore,
	\[
		\HF(R/I(Y),\alpha(Y)-1) = {{\alpha(Y)-1+k-1}\choose{k-1}} \geq {{\alpha(X)-1-a+k-1}\choose{k-1}} = \beta_a.
	\]
This completes the proof.
\end{proof}

\begin{prop}\label{prop_bound_comparison}
	Let \(X\subset{\mathbb P}^{k-1}, k\geq 3\), be a set of \(n\) points, not all contained in a hyperplane. Note \(\alpha(X)\geq 2\). Let \(1\leq a\leq \alpha(X)-1\) be an integer. Let \(\displaystyle \beta_a:= {{\alpha(X)-1-a+k-1}\choose{k-1}}\) be the lower bound from Theorem \ref{thm_better} and let \(\beta_a':= (k-1)(\alpha(X)-1-a)+1\) be the lower bound from Corollary \ref{cor_alphabound}. Then \(\beta_a\) is a better bound than \(\beta_a'\). That is, \(\beta_a = \beta_a'\) when \(\alpha(X)\in \{a+1,a+2\}\) and we have \(\beta_a > \beta_a'\) when \(\alpha(X)\geq a+3\).
\end{prop}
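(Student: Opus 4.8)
The plan is to make the substitution $t:=\alpha(X)-1-a$, which is a nonnegative integer since $a\le\alpha(X)-1$, and thereby reduce the statement to a purely combinatorial comparison of $\beta_a=\binom{t+k-1}{k-1}$ with $\beta_a'=(k-1)t+1$: we must show $\beta_a=\beta_a'$ when $t\in\{0,1\}$ (these are the cases $\alpha(X)\in\{a+1,a+2\}$) and $\beta_a>\beta_a'$ when $t\ge 2$ (the case $\alpha(X)\ge a+3$). The equalities are immediate by direct substitution: $\binom{k-1}{k-1}=1=(k-1)\cdot 0+1$ and $\binom{k}{k-1}=k=(k-1)\cdot 1+1$, which settles the boundary cases for every $k\ge 3$.

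For $t\ge 2$ I would expand $\binom{t+k-1}{k-1}$ via the Vandermonde identity as
\[
	\binom{t+k-1}{k-1}=\sum_{j=0}^{k-1}\binom{t}{j}\binom{k-1}{j},
\]
using the symmetry $\binom{k-1}{k-1-j}=\binom{k-1}{j}$. The $j=0$ term contributes $1$ and the $j=1$ term contributes $t(k-1)$, so the first two terms already sum to exactly $\beta_a'$; every remaining term is nonnegative, and the $j=2$ term is $\binom{t}{2}\binom{k-1}{2}\ge 1$ because $t\ge 2$ and $k-1\ge 2$ (the latter from $k\ge 3$). Hence $\beta_a>\beta_a'$ whenever $\alpha(X)\ge a+3$. (If one prefers to avoid Vandermonde, an induction on $k$ works equally well: by the hockey-stick identity $\binom{t+k}{k}=\sum_{i=0}^{t}\binom{i+k-1}{k-1}$, one sums the inductive bound $\binom{i+k-1}{k-1}\ge(k-1)i+1$ and tracks the equality cases; the base case $k=3$ reduces to the elementary identity $\binom{t+2}{2}-(2t+1)=\binom{t}{2}$.)

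I do not expect any genuine obstacle here. The only points that require a little care are handling the two equality cases $t=0,1$ separately from the strict case, and checking that the surplus term $\binom{t}{2}\binom{k-1}{2}$ is strictly positive — which is precisely the content of the hypotheses $\alpha(X)\ge a+3$ and $k\ge 3$. It is also worth recalling that $\beta_a$ and $\beta_a'$ are the lower bounds furnished by Theorem \ref{thm_better} and Corollary \ref{cor_alphabound} only in the stated range $1\le a\le\alpha(X)-1$, i.e. for $t\ge 0$, which is exactly the range in which the comparison is asserted.
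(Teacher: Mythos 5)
Your proof is correct and is essentially the same as the paper's: both verify the cases \(\alpha(X)\in\{a+1,a+2\}\) directly and then apply Vandermonde's identity to \(\binom{t+k-1}{k-1}\) with \(t=\alpha(X)-1-a\), identifying two terms that sum to \(\beta_a'\) and the strictly positive term \(\binom{t}{2}\binom{k-1}{2}\) (you index the sum in the reverse order from the paper, but that is immaterial).
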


\begin{proof}
	When \(\alpha(X) = a+1\) and when \(\alpha(X) = a+2\), the formulas give \(\beta_a = 1 = \beta_a'\) and \(\beta_a = k = \beta_a'\), respectively. Now suppose \(\alpha(X)\geq a+3\).
	
	Vandermonde's Identity (or Convolution) is the following equality which is true for all nonnegative integers \(A\), \(B\), and \(C\):
	\[
		{A+B\choose C} = \sum_{j=0}^C {A\choose j}{B\choose C-j}.
	\]
The integers \(B = \alpha(X) - 1 - a\) and \(A = C = k-1\) are nonnegative when \(\alpha(X) \geq a + 3\) and \(k\geq 3\), and Vandermonde's Identity tells us
	\[
		\beta_a = {\alpha(X)-1-a+k-1 \choose k-1} = \sum_{j=0}^{k-1} {k-1\choose j}{\alpha(X) - 1 - a\choose k-1-j}.
	\]
Since \(k\geq 3\), the sum on the right has at least three terms. The last two terms, where \(j = k-2\) and \(j = k-1\), sum to \(\beta_a'\). However, the third-to-last term (\(j = k-3\)) is \(\displaystyle {k-1\choose 2}{\alpha(X) - 1 - a \choose2} > 0\) when \(\alpha(X)\geq a+3\) and \(k\geq 3\). Therefore \(\beta_a > \beta_a'\) in this case because any remaining terms in the sum are nonnegative.
\end{proof}

	Of course, for \(a \geq 2\) it may be possible that better bounds than \(\beta_a\) can be found. Room for improvement exists in the proof of Theorem \ref{thm_better}: In that proof, we obtained our bound \(\beta_a\) by using \(\alpha(Y)\geq \alpha(X) - a\). If we could say more about \(\alpha(Y)\), we might have a better bound.
	
	For most of our examples we work with \(a=1\) because it is simpler geometrically: If we would choose \(a\geq 2\), then the hypersurface of degree \(a\) that contains the maximum \(\hyp(X)_a\) points of \(X\) may have irreducible components of degrees 2 or greater, and this will cause difficulties in choosing the relevant examples of \(X\).
	
\begin{ex}\label{ex_boundcomp2}
	Let \(X\) be the set from Example \ref{ex_boundcomp1}. In that example we showed \(d(X)_1 = 6\), \(\alpha(X) = 4\), and \(\beta_1' = 5\). We had \(\alpha(X) = a+3\), so by Proposition \ref{prop_bound_comparison} it should be the case that \(\beta_1\) is a better lower bound than \(\beta_1'\). Indeed, \(\beta_1 = d(X)_1 = 6 > \beta_1'\) for this set \(X\).
	
	Now, if we take \(a=2=\alpha(X)-2\), then, looking at \(P_1,\ldots, P_7\) as being on the conic that is the union of the corresponding two lines, \(\hyp(X)_2=7\) (recall Remark \ref{rem_evalcodes} (1)). Therefore \(d(X)_2=10-7=3\), which equals \(\beta_2=\beta_2'=3\).

	Lastly, taking \(a=3=\alpha(X)-1\), and looking at \(P_1,\ldots, P_9\), then \(\hyp(X)_3=9\), giving that \(d(X)_3=10-9=1\), which also equals \(\beta_3=\beta_3'\).
\end{ex}

\begin{ex}\label{ex_k_points_bounds_check}
	In this and the next example we are interested into providing evidence that it is possible to have \(d(X)_1=\beta_1=\beta_1'\).
	
	In \({\mathbb P}^{k-1}\) let \(X\) be a set of \(k\) points not contained in a hyperplane. In particular, \(\alpha(X)\geq 2\). Moreover, choose any \(k-1\) points of \(X\); they belong to some hyperplane \(V(L_1)\), and the remaining point belongs to another hyperplane \(V(L_2)\). Thus \(L_1L_2\in I(X)\) and \(\alpha(X) \leq \deg(L_1L_2) = 2\). So \(\alpha(X) = 2\). In this case \(\beta_1 = \displaystyle {{2+k-3}\choose{k-1}} = 1\), and since \(\alpha(X)=a+1\) we therefore have \(\beta_1=\beta_1'\). Since \(\hyp(X) = k-1\) and \(|X| = n = k\), by Proposition \ref{prop_mindistpoints} we have \(d(X)_1 = n - \hyp(X) = 1 = \beta_1=\beta_1'\).
\end{ex}

\begin{ex}\label{ex_seven_points}
	Here is a more concrete example, and one for which \(d(X)_1 > 1\). Consider the set of \(7\) points \(X= \{P_1,\dots,P_7\}\subset {\mathbb P}^2\) whose coordinates make up the columns of the matrix \(G\) below.

	\begin{center}
		\begin{tabular}{@{}m{\dimexpr.5\textwidth-.5\columnsep}m{\dimexpr.5\textwidth-.5\columnsep}@{}}
			\( \displaystyle
				G =	\begin{pmatrix}
						0 & 1 & 2 & 3 & 0 & 0 & 1\\
						0 & 0 & 0 & 0 & 1 & 2 & 1\\
						1 & 1 & 1 & 1 & 1 & 1 & 1
					\end{pmatrix}
			\)
			&
			\begin{tikzpicture}[scale=1]
				\filldraw[black] (0,0) circle (2pt) node[left]{\(P_1\)};
				\filldraw[black] (1,0) circle (2pt) node[left]{\(P_2\)};
				\filldraw[black] (2,0) circle (2pt) node[left]{\(P_3\)};
				\filldraw[black] (3,0) circle (2pt) node[left]{\(P_4\)};
				\filldraw[black] (0,1) circle (2pt) node[left]{\(P_5\)};
				\filldraw[black] (0,2) circle (2pt) node[left]{\(P_6\)};
				\filldraw[black] (1,1) circle (2pt) node[left]{\(P_7\)};
			\end{tikzpicture}
		\end{tabular}
	\end{center}

	From the picture it is clear that \(\hyp(X) = 4\), thus, from Proposition \ref{prop_mindistpoints}, we then have \(d(X)_1 = |X| - \hyp(X) = 3\). Notice that, by drawing two more lines through the three points not on \(V(y)\), we see \(xy(x-z)\in I(X)\), thus \(\alpha(X)\leq 3\). Using Macaulay 2 (\cite{GrSt}) we confirm that \(\alpha(X) = 3\). One can then quickly see \(d(X)_1 = \beta_1 = \beta_1' = 3\) using the provided formulas.
\end{ex}

The previous example can be generalized to any \(k\geq 3\) as in the following proposition.

\begin{prop}
	For any \(k\geq 3\) and sufficiently large base field \({\mathbb K}\), there exists a set \(X\subset {\mathbb P}^{k-1}\) of \(n = 3k-2\) points such that \(\alpha(X) = 3\) and \(d(X)_1 = \beta_1 = \beta_1' = k\).
\end{prop}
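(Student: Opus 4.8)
The plan is to lift the configuration of Example~\ref{ex_seven_points} to \({\mathbb P}^{k-1}\): put \(2k-2\) of the \(3k-2\) points on a hyperplane and the remaining \(k\) of them off it. Fix a hyperplane \(H_0=V(L_0)\subset{\mathbb P}^{k-1}\), and (using that \({\mathbb K}\) is large) choose a set \(Y_0\) of \(2k-2\) points lying on \(H_0\) and in sufficiently general position inside \(H_0\cong{\mathbb P}^{k-2}\) --- in particular in general linear position there --- together with a set \(Y_1\) of \(k\) points of \({\mathbb P}^{k-1}\) in general linear position, none of them on \(H_0\). Put \(X:=Y_0\cup Y_1\), so \(|X|=3k-2\). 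First I would compute \(\hyp(X)\). Since \(Y_0\subset H_0\) and \(Y_1\cap H_0=\emptyset\), we have \(|X\cap H_0|=2k-2\). For any hyperplane \(H'\neq H_0\), the linear space \(H'\cap H_0\) is a hyperplane of \(H_0\), so general linear position of \(Y_0\) forces \(|Y_0\cap H'|\leq k-2\); and since the \(k\) points of \(Y_1\) span \({\mathbb P}^{k-1}\), \(|Y_1\cap H'|\leq k-1\). Hence \(|X\cap H'|\leq 2k-3<2k-2\), so \(\hyp(X)=2k-2\), and Proposition~\ref{prop_mindistpoints} gives \(d(X)_1=(3k-2)-(2k-2)=k\). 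In particular \(X\) lies on no hyperplane, so \(\alpha(X)\geq 2\).

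The inequality \(\alpha(X)\leq 3\) is immediate: \(L_0\) vanishes on \(Y_0\); any \(k-1\) of the points of \(Y_1\) span a hyperplane \(V(L_1)\) (general linear position); the last point of \(Y_1\) lies on some hyperplane \(V(L_2)\); and then \(L_0L_1L_2\in I(X)\). The substantive part is \(\alpha(X)\geq 3\), i.e.\ that no quadratic form \(Q\in R_2\) vanishes on all of \(X\). Suppose one does. If \(L_0\mid Q\), then \(Q=L_0\cdot M\) with \(M\in R_1\) vanishing on \(Y_1\); but \(k\) points in general linear position span \({\mathbb P}^{k-1}\), so no nonzero such \(M\) exists --- a contradiction. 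If \(L_0\nmid Q\), then the restriction \(\overline Q:=Q|_{H_0}\) is a nonzero quadratic form of \(H_0\cong{\mathbb P}^{k-2}\) vanishing at all \(2k-2\) points of \(Y_0\).

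Ruling out this last possibility is the main obstacle, and it is the step that ties the statement to the precise numbers: one needs \(Y_0\) chosen so that its \(2k-2\) points impose independent conditions on the quadratic forms of \({\mathbb P}^{k-2}\) and are at least as numerous as the dimension \(\binom{k}{2}\) of that space, which then forces \(\overline Q=0\) and contradicts \(L_0\nmid Q\). For the ``independent conditions'' half of this, the natural approach is to peel off the points of \(Y_0\) one at a time, keeping at each stage some point of \(Y_0\) off the degree-two hypersurface (in \({\mathbb P}^{k-2}\)) spanned by the remaining ones --- the higher-dimensional analogue of the elementary fact that a nonzero binary quadratic has at most two roots, which is what underlies the \(k=3\) case in Example~\ref{ex_seven_points}. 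Once \(\alpha(X)=3\) is in hand, substituting into the two bounds gives \(\beta_1=\binom{\alpha(X)-2+k-1}{k-1}=\binom{k}{k-1}=k\) and \(\beta_1'=(k-1)(\alpha(X)-2)+1=k\), which matches the value \(d(X)_1=k\) found above, and the proof is complete.
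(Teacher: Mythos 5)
Your construction is the same as the paper's (place \(2k-2\) points on a hyperplane \(V(L)\) and \(k\) generic points off it), and your computations of \(\hyp(X)=2k-2\), of \(d(X)_1=k\), and of the upper bound \(\alpha(X)\leq 3\) are correct and in fact more carefully justified than in the paper. The gap is exactly where you locate it: the lower bound \(\alpha(X)\geq 3\). The paper disposes of this with the single sentence that the \(k\) points off \(V(L)\) may be chosen ``generically enough'' to force \(\alpha(X)\geq 3\), whereas you reduce it to showing that no nonzero quadratic form on \(H_0\cong{\mathbb P}^{k-2}\) vanishes on the \(2k-2\) points of \(Y_0\), and you correctly note that your route requires \(2k-2\geq\binom{k}{2}\). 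But that inequality reads \(2(k-1)\geq k(k-1)/2\), i.e.\ \(k\leq 4\), so the closing step you propose is numerically impossible for every \(k\geq 5\), and your argument cannot be completed in the stated generality.

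This is not merely a defect of your route: a direct dimension count shows the claim \(\alpha(X)=3\) is unattainable for \(k\geq 5\). One has \(\dim_{\mathbb K}R_2=\binom{k+1}{2}\), and \(\binom{k+1}{2}-(3k-2)=\tfrac{(k-1)(k-4)}{2}>0\) for \(k\geq 5\), so the evaluation map \(R_2\to{\mathbb K}^{3k-2}\) has nonzero kernel for \emph{any} set of \(3k-2\) points in \({\mathbb P}^{k-1}\); hence \(\alpha(X)\leq 2\) no matter how generically the points are chosen. Equivalently, the residual quadric on \(H_0\) that you could not rule out genuinely exists and extends to a quadric through the remaining \(k\) points, because there are still more quadrics than conditions. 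So the paper's genericity assertion is likewise unjustified for \(k\geq 5\): both your sketch and the paper's proof are valid only for \(k\in\{3,4\}\), and for larger \(k\) the construction would have to be enlarged (more points) before either argument could go through.
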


\begin{proof}
	Construct a set \(X\subseteq {\mathbb P}^{k-1}\) of \(n = 3k-2\) points as follows. Since \({\mathbb K}\) is sufficiently large, we may choose \(2k-2\) points in a hyperplane \(V(L)\). Then, choose \(k\) points which do not belong to \(V(L)\). Note \(X\) is not contained in a single hyperplane, so \(\alpha(X)\geq 2\). By choosing the \(k\) points off \(V(L)\) generically enough, we may assume \(\alpha(X) \geq 3\).
	
	By construction, \(\hyp(X)\geq 2k-2\). Suppose for a contradiction that \(2k-1\) points of \(X\) lie on some hyperplane \(V(L')\). Then at least \((2k-1) - k = k-1\) of those chosen points belong to \(V(L)\), meaning \(V(L) = V(L')\). This is a contradiction to the construction of \(X\) because this means more than \(2k-2\) points of \(X\) belong to \(V(L)\). Thus \(\hyp(X) = 2k-2\). Additionally, the \(k\) points of \(X\setminus V(L)\) belong to the union of 2 hyperplanes (as any set of \(k-1\) points belongs to a unique hyperplane) which we can identify as \(V(f_1)\) and \(V(f_2)\). Then \(Lf_1f_2\in I(X)\), so \(\alpha(X)\leq 3\) which implies \(\alpha(X) = 3\).
	
	Now, by Proposition \ref{prop_mindistpoints}, we have \(d(X)_1 = |X|-\hyp(X) = (3k-2)-(2k-2) = k\). With \(a = 1\) and \(\alpha(X) = 3\), we also have \(\beta_1 = \beta_1' = k\).
\end{proof}


\section{Minimum distance and the minimum socle degree}\label{sec_mindistsocle}

	In this section we will generalize \cite[Theorem 1]{BaFo}, which says the following: let \(X\subset{\mathbb P}^{k-1}\) be a finite set of points. Suppose \(X\) is a complete intersection of hypersurfaces of degrees \(e_1,\dots,e_{k-1}\) where we index the degrees so that \(1\leq e_1\leq\cdots\leq e_{k-1}\). Suppose further that \(X\) is in general linear position (i.e., any \(u\leq k\) points of \(X\) will span a \({\mathbb P}^{u-1}\); see Definition \ref{defn_generic} (2)). Then, for any \(1\leq a\leq \reg(X)-1\), one has
	\[
		d(X)_a\geq (k-1)(\reg(X)-1-a)+2.
	\]
Because \(X\) is a complete intersection, its Castelnuovo-Mumford regularity is \(\reg(X)=e_1+\cdots+e_{k-1}-(k-1)\). By using \cite{EiPo}, one can replace the ``complete intersection'' condition with the more general condition ``Gorenstein''.

\begin{rem}\label{rem_soc}
	We mentioned before that for a finite set of points \(X\subset{\mathbb P}^{k-1}\), we have
	\[
		\reg(X)\geq {\bf v}(X)\geq s(X)\geq \alpha(X)-1.
	\]
	\begin{itemize}
		\item[(i)] If \(X\) is as in Remark \ref{rem_generic} (iii), since \(\alpha(X)=\reg(X)+1=s(X)+1\), then the bound in Corollary \ref{cor_alphabound} becomes \(d(X)_a\geq (k-1)(\reg(X)-a)+1=(k-1)(s(X)-a)+1\).

		\item[(ii)] Let \(X\subset{\mathbb P}^{k-1}\) be a finite set of \(n\) points in generic position (see Definition \ref{defn_generic} (1)), and suppose \(\displaystyle {{m-1+k-1}\choose{k-1}}< n < {{m+k-1}\choose{k-1}}\), for some positive integer \(m\). Then, by \cite[Theorem 2.2]{Lo1} (or the earlier work \cite[Corollary 1.6]{GeMa}), we have \(\reg(X)=m\) and \(\alpha(X)=m\). With this, the bound in Corollary \ref{cor_alphabound} becomes \(d(X)_a\geq (k-1)(\reg(X)-1-a)+1\). In this case \(s(X)\) can only be \(m\) or \(m-1\), so, in general \(d(X)_a\geq (k-1)(s(X)-1-a)+1\). Note, the case where \(n\) is equal to the lower bound is considered in Remark \ref{rem_generic} (iv).

		\item[(iii)] Since in general \(\reg(X)\geq \alpha(X)-1\), the bound obtained in Corollary \ref{cor_alphabound} and the Ballico-Fontanari bound are comparable, with the latter being the better one (except for the cases (i) and (ii)). But, this second bound happens under the restrictive conditions ``complete intersection'' and ``general linear position'', whereas the bound in the corollary is true for any set \(X\). Since we still want to use the shifts in the ``tail'' of the free resolution to obtain a similar bound, we will drop the ``complete intersection'' condition, but keep the ``general linear position condition'' to show that
			\[
				d(X)_a\geq (k-1)(s(X)-1-a)+2.
			\]
		
		\item[(iv)] As we mentioned already in Remark \ref{rem_generic} (i), in coding theory lingo, the ``general linear position'' condition translates to the code defined by \(X\) being MDS. \cite{Ha1} emphasizes very well that when \(a\geq 2\), the value of \(d(X)_a\) for such sets of points is very challenging to bound, unless some extra algebraic geometric conditions such as ``complete intersection'', or ``level'' (i.e., \(\reg(X)=s(X)\)) are added.

	\end{itemize}
\end{rem}

	Let \(Y\subset{\mathbb P}^{k-1}\) be a finite set of points. We define the {\em rank of \(Y\)} to be the rank of the matrix having as columns the coordinates of the points of \(Y\); we denote this number \(\rk(Y)\). In other words, \({\mathbb P}^{\operatorname{rk}(Y)-1}\) is the smallest projective space which \(Y\) can be embedded into via a change of variables.

\begin{thm}\label{thm_socle}
	Let \(X\subset{\mathbb P}^{k-1}\) be a finite set of \(n\geq k\geq 3\) points in general linear position. Let \(1\leq a\leq s(X)-1\) be an integer. Then, we have either
		\[
			d(X)_a\leq k-1 \ \ \ \text{or}\ \ \ d(X)_a\geq (k-1)(s(X)-1-a)+2.
		\]
\end{thm}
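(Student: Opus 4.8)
The plan is to push the hyperplane–covering technique behind Proposition~\ref{prop_alpha} one step further. As in that proof, Proposition~\ref{prop_paramEvaluation} produces a proper subset \(X'\subsetneq X\) with \(|X'|=n-m\), where \(m:=d(X)_a\), together with a form \(f\in I(X')_a\) vanishing at no point of \(Y:=X\setminus X'=\{Q_1,\dots,Q_m\}\). If \(m\le k-1\) we are already in the first alternative, so suppose \(m\ge k\). The key new idea — which is what sharpens the ``\(+1\)'' that covering all of \(Y\) would give (as in Corollary~\ref{cor_alphabound}) to the desired ``\(+2\)'' — is to set one point aside: distinguish \(Q_1\in Y\), partition the remaining \(m-1\) points into \(\delta:=\lceil (m-1)/(k-1)\rceil\) nonempty blocks of size at most \(k-1\), and cover each block by a hyperplane that avoids \(Q_1\). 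General linear position is exactly what makes this last step work: since any \(\le k\) of the \(Q_i\) are linearly independent, the linear span of a block of at most \(k-1\) of \(Q_2,\dots,Q_m\) does not contain \(Q_1\); when a block has exactly \(k-1\) points its span already is a hyperplane missing \(Q_1\), and when it is smaller the hyperplanes through its span form a positive-dimensional linear system, only a proper subsystem of which passes through \(Q_1\), so a suitable hyperplane exists provided \({\mathbb K}\) is infinite. We may assume the latter after extending scalars to \(\overline{\mathbb K}\), which, as explained in the Introduction (cf.\ \cite{CSTVV}), changes neither \(d(X)_a\), nor \(s(X)\), nor \({\bf v}(X)\), nor the hypothesis of general linear position.

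With such hyperplanes \(V(L_1),\dots,V(L_\delta)\), set \(g:=f\cdot L_1\cdots L_\delta\), of degree \(a+\delta\). Then \(g\) vanishes on \(X'\cup(Y\setminus\{Q_1\})=X\setminus\{Q_1\}\) while \(g(Q_1)=f(Q_1)\prod_i L_i(Q_1)\neq 0\), so \(g\in I(X\setminus\{Q_1\})_{a+\delta}\setminus I(X)_{a+\delta}\); hence \(\ev_{a+\delta}(g)\) is a codeword of \(\C(X)_{a+\delta}\) of Hamming weight one, so \(d(X)_{a+\delta}=1\). By Remark~\ref{rem_vnumber}(ii) this forces \({\bf v}(X)\le a+\delta\), and using \({\bf v}(X)\ge s(X)\) (the characteristic-free version, from \cite{CSTVV}) we get \(s(X)\le a+\lceil (m-1)/(k-1)\rceil\). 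Writing \(t:=s(X)-a\ge 1\) (here the hypothesis \(a\le s(X)-1\) is used), the inequality \(\lceil (m-1)/(k-1)\rceil\ge t\) gives \(m-1>(k-1)(t-1)\), whence \(m\ge (k-1)(t-1)+2=(k-1)(s(X)-1-a)+2\), which is the second alternative.

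The step I expect to be the crux is the geometric one: arranging hyperplanes that cover all of \(Y\) except \(Q_1\) while missing \(Q_1\) entirely. This is precisely where ``general linear position'' is indispensable and cannot be weakened to the ``not all contained in a hyperplane'' hypothesis of Section~\ref{sec_mindistalpha}, since without it a block of \(k-1\) points could span the unique hyperplane through \(Q_1\). Everything else — passing to a minimum-weight codeword via Proposition~\ref{prop_paramEvaluation}, the elementary manipulation of the ceiling function, and the chain \(\reg(X)\ge{\bf v}(X)\ge s(X)\) — is routine, and the first alternative \(d(X)_a\le k-1\) simply records the small range \(m\le k-1\) in which this covering construction is not needed.
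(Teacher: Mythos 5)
Your proposal is correct, and its geometric core is exactly the paper's: cover \(Y\setminus\{Q_1\}\) by \(\lceil (m-1)/(k-1)\rceil\) hyperplanes each missing \(Q_1\), which is where general linear position is essential. Where you diverge is in how the degree of the resulting separator is converted into a socle-degree inequality. The paper stays inside the subset \(Y\): it first proves \(s(Y)\geq s(X)-a\) by multiplying a socle representative of an Artinian reduction of \(I(Y)\) by \(f\) (its Step 1), then proves from scratch that any separator of a point of \(Y\) has degree at least \(s(Y)\) (its Step 2), and finally runs the covering argument with a three-way case analysis on \(m \bmod (k-1)\). You instead multiply the hyperplane product by \(f\) to get a degree-\((a+\delta)\) separator of \(Q_1\) in \(X\) itself, observe this forces \(d(X)_{a+\delta}=1\), and invoke the characteristic-free chain \({\bf v}(X)\geq s(X)\) from \cite[Theorem 4.10]{CSTVV} (Remark \ref{rem_vnumber}) as a black box; your single ceiling-function manipulation then replaces the paper's case analysis and yields the same bound. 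Your route is shorter and self-evidently characteristic-free, at the cost of outsourcing the socle-versus-separator comparison to the cited theorem; the paper's route is self-contained, produces the intermediate inequality \(s(Y)\geq s(X)-a\), and its case analysis records slightly stronger conclusions when \(m\equiv 0,1 \pmod{k-1}\). One small remark: your hedge about needing \({\mathbb K}\) infinite is unnecessary — a hyperplane through the span of a block and avoiding \(Q_1\) corresponds to a point of a linear system outside a \emph{proper linear} subsystem, and such a point exists over any field — but it is harmless.
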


\begin{proof}
	Denote \(X=\{P_1,\ldots,P_n\}\). Let \(m:=d(X)_a\geq 1\). By Proposition \ref{prop_paramEvaluation}, let \(X'\subsetneq X\) be such that \(|X'|=n-m\), let \(Y:=X\setminus X'=\{Q_1,\ldots,Q_m\}\), and let \(f\in I(X')\) with \(\deg(f)=a\) and \(f(Q_j)\neq 0\) for all \(j=1,\ldots,m\) (from the maximality of \(|X'|\)).

	If \(\rk(Y)\leq k-1\), because \(Y\subset X\) and \(X\) is in general linear position, then \(|Y|=\rk(Y)\), and so \(d(X)_a = |Y| \leq k-1\) as claimed.
	
	Suppose instead \(d(X)_a = |Y|\geq k\). In several steps we will show \(d(X)_a\geq (k-1)(s(X)-1-a)+2\).
	
	\medskip

	\underline{Step 1:} First, we show \(s(Y)\geq s(X)-a\).
	
	Let \(L\in R:={\mathbb K}[x_1,\ldots,x_k]\) be a linear form such that \(L(P_i)\neq 0\) for all \(i=1,\ldots,n\). That is, \(L\) is a non-zerodivisor on \(R/I(X)\). Then, \(L\) is a non-zerodivisor on \(R/I(Y)\), as \(Y\subset X\).

	Let \(g+\langle I(Y),L\rangle \in {\rm Soc}(R/\langle I(Y),L\rangle)\), with \(\deg(g)=s(Y) = \alpha({\rm Soc}(R/\langle I(X),L\rangle))\) and \(g\notin I(Y)\). Then, \(g(Q)\neq 0\) for some \(Q\in Y\), and, for \(l=1,\ldots,k\), we have
	\[
		x_lg=h_l+Lg_l, \mbox{ for some } h_l\in I(Y), g_l\in R.
	\]
Multiplying the above by \(f\), for \(l=1,\ldots,k\) we have that \(x_l(fg)=(fh_l)+L(fg_l)\). But \((fg)(Q)\neq 0\), and \(fh_l\in I(X)\). Therefore, \(fg+\langle I(X),L\rangle\in {\rm Soc}(R/\langle I(X),L\rangle)\), and since \(\deg(fg)=a+s(Y)\), we obtain
	\[
		s(Y)\geq s(X)-a.
	\]
	
	\medskip

	\underline{Step 2:} Now we show if \(Q\in Y\) and \(h\in I(Y\setminus\{Q\})\setminus I(Y)\), meaning \(h\in I(Y\setminus\{Q\})\) and \(h(Q)\neq 0\), then \(\deg(h)\geq s(Y)\).
	
	Suppose \(Q=[1,0,\ldots,0]\), and let \(\tilde{L}=x_1+c_2x_2+\cdots+c_kx_k\in R_1\) be a linear form such that \(\tilde{L}(Q_j)\neq 0\) for all \(j=1,\ldots,m\), i.e., \(\tilde{L}\) is a non-zerodivisor on \(R/I(Y)\).

	Let \(h\in I(Y\setminus\{Q\})\setminus I(Y)\). Then \((x_1-\tilde{L})h\) vanishes at all points of \(Y\), so \(x_1h=\tilde{L}h+h_1\), where \(h_1\in I(Y)\). Also, for all \(l=2,\ldots,k\), we see \(x_lh\) vanishes at all points of \(Y\), so \(x_lh\in I(Y)\). Therefore, \(h+\langle I(Y),\tilde{L}\rangle \in {\rm Soc}(R/\langle I(Y),\tilde{L}\rangle)\), hence \(\deg(h)\geq s(Y)\).

	\medskip

	\underline{Step 3:} Now we use the assumption that \(d(X)_a = |Y| \geq k\). Write \(|Y|=d(X)_a=m=e(k-1)+\ell\), where \(\ell\in\{0,\ldots,k-2\}\). We have these cases:

	\begin{itemize}
		\item[(i)] If \(\ell\geq 2\), choose a point \(Q\in Y\). As \(Y\) is in general linear position (being a subset of \(X\)), the remaining points of \(Y\setminus\{Q\}\) can be placed on \(e+1\) hyperplanes such that none of the hyperplanes will pass through \(Q\): the \(e(k-1)\) points sit on exactly \(e\) hyperplanes, and the remaining \(\ell-1\leq k-3\) can be placed on a hyperplane not passing through \(Q\). From Step 2 we have \(e+1\geq s(Y)\), and so, \(e(k-1)+\ell\geq (k-1)(s(Y)-1)+\ell\). Therefore, since \(\ell\geq 2\),
			\[
				m\geq (k-1)(s(Y)-1)+2.
			\]
		\item[(ii)] If \(\ell=1\), choose a point \(Q\in Y\). The remaining \(e(k-1)\) points of \(Y\setminus\{Q\}\) can be placed on \(e\) hyperplanes, none of which pass through \(Q\), and so, from Step 2 we have \(e\geq s(Y)\). Therefore,
			\[
				m\geq (k-1)s(Y)+1.
			\]
		\item[(iii)] If \(\ell=0\), again we choose \(Q\in Y\), and place the remaining \(e(k-1)-1=(e-1)(k-1)+k-2\) points on \(e=e-1+1\) hyperplanes, none passing through \(Q\): similar to case (i), the \((e-1)(k-1)\) points sit on exactly \(e-1\) hyperplanes, and the remaining \(k-2\) can be placed on a hyperplane not passing through \(Q\). So, in this case, we obtain
			\[
				m\geq (k-1)s(Y).
			\]
	\end{itemize}

	When \(k\geq 3\), the three cases above each imply
	\[
		d(X)_a = m\geq (k-1)(s(Y)-1)+2.
	\]
Do note that the inequalities from cases (ii) and (iii) are stronger than the inequality from (i) thus, if we somehow know \(\ell \equiv d(X)_a \mod (k-1)\), we get a better conclusion.
\end{proof}

\begin{ex}
	Consider the set of \(10\) points \(X= \{P_1,\dots,P_{10}\}\subset {\mathbb P}^3\) whose coordinates make up the columns of the matrix \(G\) below. When the field \({\mathbb K}\) has characteristic 0 or for sufficiently large positive characteristic, the set \(X\) is in general linear position, which can be verified by checking that each set of 4 columns of \(G\) is linearly independent.\footnote{We would like to thank the anonymous referee who pointed out to us that the smallest positive characteristic for which this example works is \(p = 103\).}
	\[
		G =	\begin{pmatrix}
				8 & 4 & 9 & 8 & 6 & 2 & 0 & 0 & 3 & 1\\
				4 & 5 & 3 & 7 & 0 & 8 & 5 & 2 & 5 & 0\\
				3 & 0 & 2 & 0 & 2 & 4 & 2 & 7 & 0 & 9\\
				0 & 6 & 8 & 2 & 4 & 1 & 0 & 1 & 3 & 2
			\end{pmatrix}
	\]
	By Remark \ref{rem_generic} (i), \(d(X)_1 = 10-4+1=7\). At the same time, using Macaulay 2 (\cite{GrSt}), the graded minimal free resolution of \(R/I(X)\), where \(R = {\mathbb K}[x_1,\dots,x_4]\), is
	\[
		0\to R(-5)^6\to R(-4)^{15}\to R(-3)^{10}\to R \to R/I(X)\to 0.
	\]

	From the free resolution, we observe \(s(X) = 5-3=2\), and \(X\) is not a complete intersection. We see that Theorem \ref{thm_socle} is true for this example:
	\[
		7 \geq (4-1)(2-1-1) +2=2.
	\]

	The gap between \(d(X)_1=7\) and the lower bound \(2\) is big, but as we will see later in Example \ref{ex_socle_bound_attained}, this lower bound can be attained.

	Also observe that since \(s(X) = \reg(X) = 2\), then, by Remark \ref{rem_vnumber} (i), \(d(X)_a=1\) for all \(a\geq 2\).
\end{ex}


\section{Discussion and additional examples}\label{sec_discussionExamples}

	After reading the above results, it is natural to ask if there are any ``nice'' or somehow ``minimal'' examples of sets \(X\subseteq {\mathbb P}^{k-1}\) with the properties from the theorems, lemmas, and so on.

	Let us focus on sets of points for which the bound in Theorem \ref{thm_better} is attained. So, let \(X\subset{\mathbb P}^{k-1}, k\geq 3\), be a set of \(n\) points, not all contained in a hyperplane. Let \(1\leq a\leq \alpha(X)-1\) be an integer, and suppose \(\displaystyle d(X)_a=\beta_a:= {{\alpha(X)-1-a+k-1}\choose{k-1}}\).

	Define \(Y\) as in the proof of Proposition \ref{prop_alpha}, so \(|Y|=\beta_a\). Recall \(R = {\mathbb K}[x_1,\dots,x_k]\). For convenience, denote \(\alpha:=\alpha(X)\), \(\alpha':=\alpha(Y)\), \(r:=\reg(X)\), and \(r':=\reg(Y)\). With this we have \(\alpha\leq \alpha'+a\).

	If we compute the Hilbert function values we have:
	\[
		\begin{array}{r|ccccc}
			i & 0 & \cdots & \alpha'-1 & \alpha' & \cdots \\ \hline
			\HF(R/I(Y),i) & 1 & \cdots & {\alpha'-1+k-1\choose k-1} & * & \cdots
		\end{array}
	\]

	Since \(\alpha'-1\geq \alpha-a-1\), from Lemma \ref{lem_reminders} (i) and (ii) we have the inequalities
	\[
		{\alpha-a-1+k-1\choose k-1}\leq {\alpha'-1+k-1\choose k-1} \leq * \leq |Y|=\beta_a = {\alpha-a-1+k-1 \choose k-1},
	\]
which of course means we have equalities throughout. This gives the following properties of \(Y\):

\begin{itemize}
	\item[(a)] \(Y\) is in generic position.
	\item[(b)] \(r'=\alpha'-1\).
	\item[(c)] \(\alpha'=\alpha-a\).
\end{itemize}

	In Examples \ref{ex_boundcomp1} and \ref{ex_boundcomp2} we demonstrated that \(\beta_1\) is a better bound than \(\beta_1'\), and while doing that we obtained that \(d(X)_1=\beta_1\). Indeed, in Example \ref{ex_boundcomp1} the set \(Y\) of the six points not on the line \(V(y)\) is in generic position (but because \(P_5,P_6,P_7\in V(y-3z)\), the set \(Y\) is not in general linear position). From Remark \ref{rem_generic} (iii), \(\reg(Y)=2\) and \(\alpha(Y)=3\), confirming that \(\alpha(X)=4=\alpha(Y)+1\) (since \(a=1\)).

	So, in order to construct examples of sets \(X\) for which \(d(X)_a=\beta_a\), we will do the following:

\begin{itemize}
	  \item First, for some integer \(m\geq a+3\) we choose a set \(Y\subset{\mathbb P}^{k-1}\) of \(\displaystyle {m-1-a+k-1\choose k-1}\) in generic position. Often, randomly generated points will be in generic position, so it is easy to obtain these points for constructing examples.
	  \item Next, we choose a set \(X'\) of sufficiently many points outside \(Y\) on a degree \(a\) hypersurface.
	  \item Taking the disjoint union \(X=Y\cup X'\), if \(Y\) is chosen generically enough and if the cardinality of \(X'\) is sufficiently large, we should have \(d(X)_a=|Y|\) and \(\alpha(X)=m\), hence \(X\) satisfies the bound in Theorem \ref{thm_better}, and \(d(X)_a=\beta_a>\beta_a'\).
\end{itemize}

	Of course, being able to choose these points depends heavily on the size and characteristic of the field \({\mathbb K}\) being sufficiently large.

\begin{rem}\label{rem_example_construction}
	In this remark we present more details about the above construction for \(a=1\).
	
	Choose \(k\geq 3\). Fix an integer \(m\geq 4\). Choose a set of \(\displaystyle {m+k-3\choose k-1}\) points \(Y\) in generic position and identify a hyperplane \(V(L)\) which does not intersect \(Y\). Choose some number of points \(\{P_1,\dots,P_\ell\}\), where \(\ell\geq 1\), in \(V(L)\) and set \(X = Y\cup\{P_1,\dots,P_\ell\}\). Our goal now is to show that by choosing \(\ell\) sufficiently large, we get \(\alpha(X) = m\), \(\hyp(X) = \ell\), and \(\displaystyle d(X)_1 = |Y| = {m+k-3\choose k-1}\). The last condition is immediate if \(\hyp(X) = \ell\), and the condition \(\hyp(X) = \ell\) is clearly guaranteed by choosing \(\ell\) sufficiently large. We only need to show that for \(\ell\) large enough, \(\alpha(X) = m\), too.
	
	Since \(Y\) is in generic position, by Remark \ref{rem_generic}(iii) we know \(\alpha(Y) = (m-2)+1=m-1\) and \(\mu:=\mu(I(Y))=\displaystyle {m+k-3\choose k-2}\). By Lemma \ref{lem_reminders}(iii), we know \(\alpha(Y)\leq \alpha(X)\leq \alpha(Y)+1\), so \(m-1\leq \alpha(X)\leq m\).

	In particular, \(I(Y)\) is generated by \(\mu\) degree \(m-1\) homogeneous polynomials; call them \(f_1,\dots,f_{\mu}\). If \(\alpha(X) = m-1\), then there is a nonzero degree \(m-1\) polynomial \(g\in I(X)\subset I(Y)\). Therefore there are \(\mu\) constants \(a_1,\dots,a_{\mu}\in {\mathbb K}\) such that
	\[
		g = a_1f_1+\cdots+a_{\mu}f_{\mu}.
	\]

	We have \(g(P_i)=0\), for all \(i=1,\ldots,\ell\), so by evaluating the above at the points \(P_i\) we obtain a homogeneous linear system of \(\ell\) equations in variables \(a_1,\ldots,a_{\mu}\), with an \(\ell\times \mu\) coefficients matrix \(M:=(f_j(P_i))_{i,j}\). Suppose \(\ell\geq\mu\). By choosing \(Y\) generically enough, we can assume that the coefficients of the \(f_j\) result in the matrix \(M\) having rank \(\mu\). This gives that \(a_1=\cdots=a_{\mu}=0\), a contradiction. Therefore, choosing \(\ell\geq\mu\) will produce \(\alpha(X)=m\).
	
	In Example \ref{ex_boundcomp1} we had \(m=4\) and \(k=3\) which gave \(\operatorname{hyp}(X)=\ell=\mu=m=4\).
\end{rem}

\begin{ex}\label{ex_aTwo}
	In this example we look a bit at the case when \(a=2\). For simplicity, we are still working in \({\mathbb P}^2\), so \(k=3\), and we will set \(m=5\).

	In this case, \(Y\) is a set of \(\displaystyle {5-1-2+2\choose 2}=6\) points in \({\mathbb P}^2\). Again, we have \(\alpha(Y)=3\), and \(\mu(I(Y))=4\); i.e. \(I(Y)=\langle f_1,\ldots,f_4\rangle\subset R:={\mathbb K}[x_1,x_2,x_3]\), where \(\deg(f_i)=3\), for \(i=1,\ldots,4\).

	We have \(\alpha(X)\leq \alpha(Y)+a=5\). We want to show that if we choose \(\ell\) points on a curve of degree \(a=2\), with \(\ell\) sufficiently large, we have equality and \(|X| - d(X)_a =\ell\).
	
	We can argue similarly as in Remark \ref{rem_example_construction}. Suppose to the contrary that \(\alpha(X)<5\), so there exists a nonzero polynomial \(g\) of degree 4 in \(I(X)\). Then, since \(I(X)\subset I(Y)\), we have
	\begin{align*}
		g	&=\underbrace{(a_1x_1+b_1x_2+c_1x_3)}_{L_1}f_1+\underbrace{(a_2x_1+b_2x_2+c_2x_3)}_{L_2}f_2\\
			&\hspace{6em} +\underbrace{(a_3x_1+b_3x_2+c_3x_3)}_{L_3}f_3+ \underbrace{(a_4x_1+b_4x_2+c_4x_3)}_{L_4}f_4
	\end{align*}
where \(a_1,\ldots,a_4,b_1,\ldots,b_4,c_1\ldots,c_4\in{\mathbb K}\).

	Choosing \(\ell\) to be larger than 12, which is the number of these constants, by choosing \(Y\) carefully we can obtain the contradiction \(g=0\).
	
	Working with \(\ell > 12\) guarantees an example with the above construction has the desired properties. However, it is not strictly necessary to have \(\ell > 12\) to have the desired properties. Here is a more specific example with \(\ell=9\). Consider the set of \(15\) points \(X= \{P_1,\dots,P_{15}\}\subset {\mathbb P}^2\) whose coordinates make up the columns of the matrix \(G\) below. The set \(Y\) will consist of the points represented by the first 6 columns while the remaining 9 points will lie on a degree 2 curve.
	\[
		G =	\begin{pmatrix}
				1 & 2 & 3 & 4 & 4 & 5 & 1 & 2 & 3 & 4 & 5 & 0 & 0 & 0 & 0\\
				1 & 3 & 2 & 4 & 5 & 1 & 0 & 0 & 0 & 0 & 0 & 1 & 2 & 3 & 4\\
				1 & 1 & 1 & 1 & 1 & 1 & 1 & 1 & 1 & 1 & 1 & 1 & 1 & 1 & 1
			\end{pmatrix}
	\]
	\begin{center}
		\begin{tikzpicture}
			\draw[thick, dashed] (-1,0) -- (6,0) node[right]{\(V(y)\)};
			\draw[thick, dashed] (0,-1) -- (0,5) node[right]{\(V(x)\)};
			\filldraw[black] (1,1) circle (2pt) node[below]{\(P_1\)};
			\filldraw[black] (2,3) circle (2pt) node[below]{\(P_2\)};
			\filldraw[black] (3,2) circle (2pt) node[below]{\(P_3\)};
			\filldraw[black] (4,4) circle (2pt) node[below]{\(P_4\)};
			\filldraw[black] (4,5) circle (2pt) node[below]{\(P_5\)};
			\filldraw[black] (5,1) circle (2pt) node[below]{\(P_6\)};
			\filldraw[black] (1,0) circle (2pt) node[below]{\(P_7\)};
			\filldraw[black] (2,0) circle (2pt) node[below]{\(P_8\)};
			\filldraw[black] (3,0) circle (2pt) node[below]{\(P_9\)};
			\filldraw[black] (4,0) circle (2pt) node[below]{\(P_{10}\)};
			\filldraw[black] (5,0) circle (2pt) node[below]{\(P_{11}\)};
			\filldraw[black] (0,1) circle (2pt) node[left]{\(P_{12}\)};
			\filldraw[black] (0,2) circle (2pt) node[left]{\(P_{13}\)};
			\filldraw[black] (0,3) circle (2pt) node[left]{\(P_{14}\)};
			\filldraw[black] (0,4) circle (2pt) node[left]{\(P_{15}\)};
		\end{tikzpicture}
	\end{center}
	
	Using Macaulay 2 (\cite{GrSt}), one can verify that for \(Y = \{P_1,\dots, P_6\}\), we have \(\alpha(Y) = 3\) as mandated by the construction at the start of this example, and \(I(Y)\) is generated by four cubics. We have \(\alpha(X) = 5\), the maximum possible value. The \(\ell = 9\) points \(P_7,\dots,P_{15}\) lie on the degree \(a=2\) curve \(V(xy)\). Indeed we have
	\[
		d(X)_2=6=\beta_2={{5-1-2+3-1}\choose{3-1}}.
	\]
	
	As a side note, using Macaulay 2 (\cite{GrSt}), we discover more interesting properties about the set \(X\). The Hilbert series for \(R/I(X)\) is \(\{1,3,6,10,15,15,15,\dots\}\). From Definition \ref{defn_generic} (1), the set \(X\) is in generic 15-position. By simply looking at the picture of \(X\), we see it is not in general linear position: we can easily identify a set of more than 2 collinear points. Furthermore, since \(|X|=15=\displaystyle {{5-1+3-1}\choose{3-1}}\), from Remark \ref{rem_generic} (iv) we know the regularity of \(X\) is \(\reg(X) = s(X) = \alpha(X)-1 = 4\). From the same remark we expect \(R/I(X)\) to have a linear graded free resolution, and in fact it has the following resolution.
	\[
		0 \to R(-6)^5 \to R(-5)^5 \to R \to R/I(X) \to 0.
	\]
\end{ex}

\begin{ex}\label{ex_socle_bound_attained}
	We will show the lower bound from Theorem \ref{thm_socle} can be attained. In \({\mathbb P}^2\) let \(Y\) be the complete intersection of an irreducible quintic and an irreducible conic, so \(|Y| = 10\). (If necessary, take an extension field large enough that the points \(Y\) are all rational over the base field.)\footnote{We would like to thank an anonymous referee for their suggestion that helped us generalize this example to fields which are not algebraically closed.} Let \(X\) be the union of \(Y\) with the four points in \(V(x_1(x_1-x_3),x_2(x_2-x_3))\). Note, then, that \(X\) is a set of \({\mathbb K}\)-rational points.
	\begin{center}
		\begin{tikzpicture}[scale=2]
			\draw[smooth,thick,black,domain=0.39:3.65,samples=400]
				plot (\x,{.2+(\x-.5)*(\x-1)*(\x-2.1)*(\x-2.9)*(\x-3.6)});
			
			\draw[smooth,thick,black,variable=\t,domain=0:2*pi,samples=400]
				plot ({2*cos(\t r)+2},{.4*sin(\t r)});
	
			\filldraw[black] (0.512,0.267) circle (1pt) ;
			\filldraw[black] (0.948,0.34) circle (1pt) ;
			\filldraw[black] (2.199,0.398) circle (1pt) ;
			\filldraw[black] (2.832,0.364) circle (1pt) ;
			\filldraw[black] (3.605,0.239) circle (1pt) ;
			\filldraw[black] (3.535,-0.256) circle (1pt) ;
			\filldraw[black] (3.096,-0.335) circle (1pt) ;
			\filldraw[black] (1.811,-0.398) circle (1pt) ;
			\filldraw[black] (1.225,-0.369) circle (1pt) ;
			\filldraw[black] (0.438,-0.25) circle (1pt) ;
			
			\filldraw[black] (4.2,0.55) circle (1pt) ;
			\filldraw[black] (4.7,0.55) circle (1pt) ;
			\filldraw[black] (4.2,0.05) circle (1pt) ;
			\filldraw[black] (4.7,0.05) circle (1pt) ;
		\end{tikzpicture}
	\end{center}
	Recall from Remark \ref{rem_evalcodes} (1), \(\hyp(X)_a\) denotes the maximum number of points in a proper subset of \(X\) which lie on a degree \(a\) hypersurface. Then \(d(X)_a = n - \hyp(X)_a\).
	
	By the choosing of the quintic and the conic generally enough, \(X\) can be constructed to be in general linear position. From the picture, observe that \(\hyp(X)_2 = |Y| = 10\), so \(d(X)_2 = 14-10=4 > 2 = k -1\).

	With Macaulay 2 (\cite{GrSt}), we obtain \(s(X) = 4\), and therefore, the bound in Theorem \ref{thm_socle} is attained:
	\[
		(k-1)(s(X)-1-a)+2  = (3-1)(4-1-2)+2 = 4 = d(X)_2.
	\]
	Observe in this example that \(\reg(X)=5\), and hence the lower bound we are discussing here will not be true if one is tempted to replace \(s(X)\) with \(\reg(X)\).
	
	Since \(X\) is in general linear position, by taking \(Y\), which is contained in the conic, and two of the other points, which are contained in a line, we have \(\hyp(X)_3 = 12\) so \(d(X)_3 = 2\). We have \(d(X)_3 \leq 2 = k-1\). However, note that
	\[
		(k-1)(s(X)-1-a)+2 = (3-1)(4-1-3)+2 = 2 = d(X)_3.
	\]

	In the proof of Theorem \ref{thm_socle}, for \(1\leq a\leq s(X)-1\), the lower bound is only proved to be valid when \(d(X)_a > k-1\). This example begs the question: When do we have \((k-1)(s(X)-1-a)+2\leq d(X)_a\leq k-1\), i.e., when is the lower bound valid despite \(d(X)_a\) not being ``large enough''?
\end{ex}

\begin{rem}
	Let \(X\subset {\mathbb P}^{k-1}\), where \(k\geq 3\), be a set of points in general linear position.  Suppose for an integer \(a\), where \(1\leq a\leq s(X)-1\), that
	\[
		(k-1)(s(X)-1-a)+2 \leq d(X)_a \leq k-1.
	\]
A quick check will show that this inequality necessitates \(a = s(X)-1\), so the above inequality becomes
	\[
		2 \leq d(X)_{s(X)-1}\leq k-1.
	\]
Thus, the lower bound is valid when \(d(X)_a \leq k-1\) only for one specific value of \(a\) (the largest possible value we considered in the theorem) and the lower bound can only ever equal \(k-1\) when \(k = 3\). Notice that at the end of Example \ref{ex_socle_bound_attained}, we looked at \(d(X)_a\) where \(a = s(X)-1\) and \(k = 3\).
\end{rem}


\vskip 0.2in

\noindent \textbf{Acknowledgments}

We are very grateful to the anonymous referees for their comments and suggestions which improved the clarity of this paper, especially in regard to the careful choosing of the base field over which we work in our examples.


\renewcommand{\baselinestretch}{1.0}
\small\normalsize 

\bibliographystyle{amsalpha}

\end{document}